\newcommand{\specnorm}[1]{\Vert  #1 \Vert_{2\rightarrow 2}}
\DeclareFontFamily{U}{matha}{\hyphenchar\font45}
\DeclareFontShape{U}{matha}{m}{n}{
      <5> <6> <7> <8> <9> <10> gen * matha
      <10.95> matha10 <12> <14.4> <17.28> <20.74> <24.88> matha12
      }{}
\DeclareSymbolFont{matha}{U}{matha}{m}{n}
\DeclareFontFamily{U}{mathx}{\hyphenchar\font45}
\DeclareFontShape{U}{mathx}{m}{n}{
      <5> <6> <7> <8> <9> <10>
      <10.95> <12> <14.4> <17.28> <20.74> <24.88>
      mathx10
      }{}
\DeclareSymbolFont{mathx}{U}{mathx}{m}{n}
\DeclareMathDelimiter{\vvvert}{0}{matha}{"7E}{mathx}{"17}
\newcommand{\gennorm}[1]{\vvvert #1 \vvvert}
\newtheorem{definition}{Definition}
\newtheorem{remark}{Remark}
\newtheorem{lemma}{Lemma}
\newtheorem{theorem}{Theorem}
\newtheorem{corollary}{Corollary}
\begin{document}

% Used for creating new theorem and remark environments
% \newsiamremark{remark}{Remark}
% \newsiamremark{hypothesis}{Hypothesis}
% \crefname{hypothesis}{Hypothesis}{Hypotheses}
% \newsiamthm{claim}{Claim}

% % Sets running headers as well as PDF title and authors
% \headers{Robust recovery of low-rank matrices and tensors}{Anna Ma, Dominik Stöger, and Yizhe Zhu}

% Title. If the supplement option is on, then "Supplementary Material"
% is automatically inserted before the title.
\title{Robust recovery of low-rank matrices and low-tubal-rank tensors from noisy sketches\thanks{A.M. was supported by U.S. Air Force Grant FA9550-18-1-0031, NSF
Grant DMS 2027299, and the NSF+Simons Research Collaborations on the
Mathematical and Scientific Foundations of Deep Learning. Y.Z. was partially supported by  NSF-Simons Research Collaborations on the Mathematical and Scientific Foundations of Deep Learning. Y.Z. also acknowledges support from NSF DMS-1928930 during his participation in the program “Universality and Integrability in Random Matrix
Theory and Interacting Particle Systems” hosted by the Mathematical Sciences Research Institute
in Berkeley, California, during the Fall semester of 2021.}}

% Authors: full names plus addresses.
\author{Anna Ma\thanks{Department of Mathematics, University of California Irvine 
  ({anna.ma@uci.edu}).}
\and Dominik Stöger\thanks{Department of Mathematics and Mathematical Institute for Machine Learning and Data Science (MIDS), KU Eichst\"att-Ingolstadt 
  ({dominik.stoeger@ku.de}).}
\and Yizhe Zhu\thanks{Department of Mathematics, University of California Irvine 
  ({yizhe.zhu@uci.edu}).}}

\maketitle

% REQUIRED
\begin{abstract} A common approach for compressing large-scale data is through matrix sketching. In this work, we consider the problem of recovering low-rank matrices from two noisy linear sketches using the double sketching scheme discussed in Fazel et al. (2008), which is based on an approach by Woolfe et al. (2008). Using tools from non-asymptotic random matrix theory, we provide the first theoretical guarantees characterizing the error between the output of the double sketch algorithm and the ground truth low-rank matrix. We apply our result to the problems of low-rank matrix approximation and low-tubal-rank tensor recovery.
\end{abstract}

% REQUIRED
% \begin{keywords}
%   sketching, low-rank matrix recovery, low-tubal-rank tensor recovery
% \end{keywords}

% REQUIRED
% \begin{AMS}
% 65F55, 15A60
% \end{AMS}

\section{Introduction}

The prevalence of large-scale data in data science applications has created immense demand for methods that can aid in reducing the computational cost of processing and storing said data. Oftentimes, data, such as images, videos, and text documents, can be represented as a matrix, and thus, the ability to store matrices efficiently becomes an important task. One way to efficiently store a large-scale matrix $X_0$ $\in \mathbb{C}^{n_1 \times n_2}$ is to store a \textit{sketch} of the matrix, i.e., another matrix $Y \in \mathbb{R}^{m_1 \times m_2}$ such that two goals are accomplished. Firstly, the sketch of $X_0$ must be cheaper to store than $X_0$ itself, i.e., we want $m_1m_2 \ll n_1n_2$. Second, 
we must be able to recover $X_0$ from its sketch, meaning that we are able to reconstruct $X_0$ exactly or at least be able to obtain a high-quality approximation of $X_0$.

A variety of works have been produced for the setting in which $X_0$ is a low-rank matrix, and one wishes to recover $X_0$ using its sketches, see, e.g., \cite{fazel2008compressed,woolfe2008fast,clarkson2009numerical,romberg_sketching, tropp_sketching, tropp_sketching2}. However, in certain settings, one may only have access to noisy sketches. For example, noise in the sketch can arise
from measurements or numerical round-off errors, a low-rank approximation of the ground truth matrix, or be a result of sketching noisy matrices~\cite{widrow2008quantization, bhunia2022sketching}. Noisy sketches have also appeared in recovering low-rank matrices from noisy linear observations \cite{negahban_wainwright,srinivasa2019decentralized}. 
It should be noted that in the aforementioned noisy observation models, only one noisy sketch is obtained, while in the model we consider here~\eqref{eq:sketchstep}, there are two noisy sketches available. Noisy sketches are also used in differentially private algorithms for matrix factorization \cite{upadhyay2018price,arora2018differentially}. In particular, the model we consider in \eqref{eq:sketchstep} is closely related to the output perturbation method described in \cite{upadhyay2018price,arora2018differentially}, which is an important intermediate step for algorithms that preserve differential privacy of a low-rank matrix factorization (see Algorithm 1 in \cite{upadhyay2018price}).

In this work, we analyze the noisy double-sketch algorithm considered but not theoretically studied in \cite{fazel2008compressed} and which is based on an approach by Woolfe et al. \cite{woolfe2008fast}. Here, the term ``double-sketch" refers to the first algorithm in \cite{fazel2008compressed} where two independent random sketching matrices are applied to $X_0$ by multiplying from the left and right, respectively. We show that when the sketching matrices are i.i.d. complex Gaussian random matrices, one can recover the original low-rank matrix $X_0$ with high probability where the error on the approximation depends on the noise level for both sketches. Here, we do not assume that one has access to the exact rank of $X_0$ but instead, an approximate rank $r$, which satisfies $r > r_0 := \text{rank} (X_0)$. We also remark on the utility of our theoretical guarantees for when the double sketch algorithm is used not for low-rank matrix recovery but instead for low-rank approximation with noise.   Lastly, we present results for the application of this work to a more extreme large-scale data setting in which one wants to recover a low-tubal-rank tensor \cite{kilmer2013third} (see Definition \ref{def:low_tubal}), which is used as a model for multidimensional data in applications.

A key step in our robust recovery analysis is to control the perturbation error of a low-rank matrix under noise.
A standard way is to apply Wedin's Theorem, or Davis-Kahan Theorem \cite{wedin1972perturbation,davis1970rotation,chen2020spectral,o2018random}, which results in a bound that depends on the condition number of the low-rank matrix. However, our proof is based on an exact formula to calculate the difference between output and the ground truth matrix and a  detailed analysis of the random matrices (extreme singular value bounds of Gaussian matrices \cite{vershynin2018high,rudelson2010non,tao2010random} and the least singular value of truncated Haar unitary matrices \cite{banks2020pseudospectral,dumitriu2012smallest}) involved in the double sketch algorithm. This novel approach yields a bound independent of the condition number of the low-rank matrix (Theorem \ref{thm:main}). Due to the Gaussian structure of our sensing matrices, our results are non-asymptotic, and all the constants involved in the probabilistic error bounds are explicit.

\subsection{Low-rank matrix reconstruction}
The double sketch method, which is analyzed in this paper, was considered (but not theoretically studied) in \cite{fazel2008compressed} to recover low-rank matrices from noisy sketches and is based on an approach by Woolfe et al. \cite{woolfe2008fast}. This method is a single-pass method, meaning that the input matrix needs only to be read once.
Related single-pass sketching methods were analyzed in \cite{clarkson2009numerical,woolfe2008fast,tropp_sketching,tropp_sketching2} in the noise-free scenario, meaning that, in contrast to the model studied in this paper, the sketches are not perturbed by additive noise.

The single-pass model is motivated by the fact that in many modern applications, one cannot make several passes over the input matrix $X_0$. However, if one is able to make several passes over the input matrix $X_0$, one can, in many applications, expect superior performance compared to single-pass algorithms. We note that multi-pass algorithms for low-rank matrix approximation using sketches have been studied in, e.g., \cite{zhou2012bilateral,halko2011finding,PCA_largedatasets}. 

The so-called problem of compressive PCA was studied in \cite{romberg_sketching} and \cite{Azizyan_compressivePCA}.
It can be interpreted as a variant of sketching where only the columns of a matrix are sketched.
However, this problem is not directly comparable to the setting in the paper at hand, as in compressive PCA, a different sketching matrix is used for each column.
\subsection{Low-tubal-rank tensor recovery}

The notion of a low-tubal-rank tensor stems from the t-product, originally introduced by \cite{kilmer2011factorization}. We state the relevant definitions for order-3 tensors. We refer to \cite{kilmer2013third} for a more comprehensive treatment. More general definitions for tensors of higher orders can be found in \cite{kernfeld2015tensor,lu2019tensor}.

\begin{definition}[Operations on tensors, see, e.g., \textcolor{black}{\cite[Section 3.2]{kilmer2011factorization}}] 
Let $\mathcal{A} \in \mathbb{C}^{n_1 \times n_2 \times n_3}$. The unfold of a tensor is defined to be the frontal slice stacking of that tensor. In other words,
\begin{equation*} 
    \text{unfold}({\mathcal{A}}) = \begin{pmatrix}
        A_1\\
        A_{2}\\
        \vdots\\
        A_{n_3}\\
        \end{pmatrix} \in \mathbb{C}^{n_1 n_3 \times n_2},
\end{equation*}
where $A_i = \mathcal{A}_{:,:,i}$ denotes the $i^{th}$ frontal slice of $\mathcal{A}$.
We define the inverse of the $\text{unfold}(\cdot)$ as $\text{fold}{(\cdot)}$ so that $\text{fold}(\text{unfold}(\mathcal{A})) = \mathcal{A}$.
The block circulant matrix of $\mathcal{A}$ is:
\begin{equation*} 
    \text{bcirc}({\mathcal{A}}) = \begin{pmatrix}
        A_1 & A_{n_3} & A_{n_3-1} & \ldots & A_{2}\\
        A_{2} & A_{1} & A_{n_3} & \ldots & A_{3}\\
        \vdots & \vdots & \vdots & \ddots & \vdots \\
        A_{n_3} & A_{n_3-1} & A_{n_3-2} & \ldots & A_{1}\\
        \end{pmatrix} \in \mathbb{C}^{n_1 n_3 \times n_2 n_3}.
\end{equation*}
\label{def:tensorops}

The conjugate transpose of a tensor $\mathcal A\in \mathbb C^{n_1\times n_2\times n_3}$ is the  tensor $\mathcal A^* \in \mathbb C^{n_2\times n_1\times n_3} $
obtained by conjugate
transposing each of the frontal slices and then reversing the
order of transposed frontal slices $2$ through $n_3$.
\end{definition}

\begin{definition}[Tensor t-product, see \textcolor{black}{\cite[Section 3.2]{kilmer2011factorization}}] Let $\mathcal{A} \in \mathbb{C}^{n_1\times \ell \times n_3}$ and $\mathcal{B}\in \mathbb{C}^{\ell\times n_2 \times n_3}$ then the t-product between $\mathcal{A}$ and $\mathcal{B}$, denoted $\mathcal {A} * \mathcal{B}$, is a tensor of size $n_1 \times n_2 \times n_3$ and is computed as:
\begin{equation}
    \mathcal{A} * \mathcal{B} = \text{fold}(\text{bcirc}(\mathcal{A})\text{unfold}(\mathcal{B})).
\end{equation}
\label{def:tprod}
\end{definition}
In Definition~\ref{def:tprod} the t-product utilizes a block circulant matrix. Since block circulant matrices can be block diagonalized by the Fourier transform, it is useful to define the mode-3 FFT, which allows us to compute the t-product efficiently in the Fourier domain and avoid the need to explicitly form, store, and multiply $\text{bcirc}(\mathcal{A})$ and $\text{unfold}(\mathcal{B})$. See~\cite{kilmer2013third} for more details.

\begin{definition}[Mode-3 fast Fourier transformation (FFT), see \textcolor{black}{\cite[Section 2]{kilmer2013third}}] The mode-3 FFT of a tensor $\mathcal{A}$, denoted $\widehat{\mathcal{A}}$, is obtained by applying the discrete Fourier Transform matrix, $F \in \mathbb{C}^{n_3 \times n_3}$, to each $ \mathcal{A}_{i,j,:}$ of $\mathcal{A}$:
\begin{equation}
 \widehat{\mathcal{A}}_{i,j,:} = F\mathcal{A}_{i,j,:}.
    \label{eq:mode3fft}
\end{equation}
Here,  $F$ is a unitary matrix, $\mathcal{A}_{i,j,:}$ is an $n_3$-dimensional vector, and the product is the usual matrix-vector product. Similarly, the inverse mode-3 fast Fourier transform is defined by applying the inverse discrete Fourier Transform matrix to each $ \widehat{\mathcal{A}}_{i,j,:}$ of $\widehat{\mathcal{A}}$.
\label{def:mode3fft}
\end{definition}

\begin{definition}[t-SVD, see \textcolor{black}{\cite[Theorem 4.1]{kilmer2011factorization}}] The Tensor Singular Value Decomposition (t-SVD) of a tensor $\mathcal{M} \in \mathbb{C}^{n_1 \times n_2 \times n_3}$ is given by 
\begin{equation}
\mathcal{M} = \mathcal{U} * \mathcal{S} * \mathcal{V}^*,
    \label{eq:tsvd}
\end{equation}
where $\mathcal{U} \in \mathbb{C}^{n_1 \times n_1 \times n_3}$ and $\mathcal{V} \in \mathbb{C}^{n_2 \times n_2 \times n_3}$ are unitary tensors and $\mathcal{S} \in \mathbb{R}^{n_1 \times n_2 \times n_3}$ is a tubal tensor (a tensor in which each frontal slice is diagonal), and $*$ denotes the t-product.
\label{def:tsvd}
\end{definition}
 
\begin{definition}[Tubal rank, see \textcolor{black}{\cite[Section 4.1]{kilmer2013third}}]\label{def:low_tubal}
The tubal rank of a tensor $\mathcal{M} = \mathcal{U} * \mathcal{S} * \mathcal{V}^*$ is the number of non-zero singular tubes of $\mathcal{S}$, and it is uniquely defined.
\end{definition}

\begin{definition}[CP rank]
The CP rank of an order three tensor $\mathcal M$ is the smallest integer $r$ such that $\mathcal M$ is a sum of $r$ rank-1 tensors:
\[  \mathcal M=\sum_{i=1}^r u_i\otimes v_i\otimes w_i,
\]
where $u_i\in \mathbb C^{n_1}, v_i\in \mathbb C^{n_2}, w_i\in \mathbb C^{n_3}$, $1\leq i\leq r$.
\end{definition}

If a tensor $\mathcal{M}$ has CP rank $r$, then its tubal rank is at most $r$, see \cite[Remark 2.3]{zhang2016exact}.

\begin{definition}[Tensor Frobenius norm, see, e.g., \textcolor{black}{\cite[Definition 2.1]{kilmer2011factorization}}]
Let $\mathcal M\in \mathbb C^{n_1\times n_2\times n_3}$. The Frobenius norm of $\mathcal M$ is given by 
\[ \|\mathcal M\|_F=\left(\sum_{i_1,i_2,i_3} |\mathcal M_{i_1,i_2,i_3}|^2 \right)^{1/2}.
\]
\end{definition}

Other low-rank tensor sketching approaches have been proposed for low-CP-rank tensors \cite{hao2020sparse} and low-Tucker-rank tensors \cite{sun2020low}.  In the following, we focus on low-tubal-rank tensors since this is the topic of this paper.

Related work considered the recovery of low-tubal-rank tensors through general linear Gaussian measurements of the form $y = A \text{vec} (\mathcal{X}) $~\cite{lu2018exact,zhang2020rip,zhang2021tensor}.
This can be seen as a generalization of the low-rank matrix recovery problem \cite{fazel_recht} to low-tubal-rank tensors.
The proof of tensor recovery under Gaussian measurements in \cite{lu2018exact,zhang2020rip,zhang2021tensor} relies crucially on the assumption that the entries of the measurement matrix $A$ are i.i.d. Gaussian. In this setting, it was shown that the tensor nuclear norm is an atomic norm, and a general theorem from \cite[Corollary 12]{chandrasekaran2012convex} for i.i.d. measurements for atomic norms was used to establish recovery guarantees.
In \cite{wang2021generalized}, a non-convex surrogate for the tensor nuclear norm was proposed and studied.

An extension of the matrix sketching algorithm in \cite{tropp_sketching2} to a low-tubal-rank approximation of tensors was considered in \cite{qi2021t}. However, their setting does not cover noisy sketching, which is the topic of this paper. 
Streaming low-tubal-rank tensor approximation was considered in \cite{yi2021effective}.

\bigskip 

\paragraph{Notations}  We define a standard complex Gaussian random variable $  Z$ as $ Z=  X+i  Y$, where $  X\sim N(0,1/2),   Y\sim N(0,1/2)$ and $  X,  Y$ are independent.
By $\sigma_i (A)$, we define the $i$-th largest singular value of a matrix $A$. 
By $\sigma_{\min} (A)$, we denote the smallest non-zero singular value of a matrix $A$.
 $\specnorm{A}$ is the spectral norm of a matrix $A$, and $\gennorm{A}$ is a general norm of $A$. The matrix $A^*$ is the complex conjugate, and $A^{\dagger}$ is the pseudo-inverse of $A$. Let $U$ be a matrix with orthonormal columns. By $U_{\perp}$,  we denote the orthogonal complement of $U$, which means the column vectors of $U$ and the column vectors of $U_{\perp}$ form a complete orthonormal basis.

\paragraph{Organization of the paper} The rest of the paper is organized as follows. We provide more background on low-rank matrix recovery from sketches in Section \ref{sec:background}. We state our main results and corollaries of robust recovery of low-rank matrix and low-tubal-rank tensors in Section \ref{sec:main}. The proofs of all theorems and corollaries are provided in Section \ref{sec:proof}, and auxiliary lemmas are provided in the Appendix. In Section~\ref{sec:experiments}, we provide numerical experiments on synthetic data that support our theoretical findings and an experiment using real-world CT scan data as an application of double sketching for tensors.

\section{Problem formulation and background}\label{sec:background}
In the following, we denote by $X_0 \in \mathbb C^{n_1\times n_2}$ the target matrix with rank $r_0$ to be sketched.
For that, we use two sketching matrices $S \in \mathbb C^{r\times n_1}$ and $ \tilde{S}\in \mathbb C^{r\times n_2}$, which are assumed to be two independent complex Gaussian random matrices with $r\geq r_0$. 
We consider the model
\begin{align}\label{eq:sketchstep}
     Y:=SX_0+Z, \quad \tilde{Y} :=\tilde{S}X_0^*+\tilde{Z},
 \end{align}
where $Z\in \mathbb C^{r \times n_2}$ and $ \tilde{Z}\in \mathbb C^{r \times n_1}$ represent additive noise.
This model has first been considered (but not been analyzed) in \cite{fazel2008compressed}.
The matrices $S X_0$ and $\tilde{S}X_0^*$, represent the noiseless sketches, which contain information about the row space and column space of the target matrix $X_0$.
Our goal is to estimate the true matrix $X_0$ given the two noisy sketches $Y$ and $\tilde{Y}$.
For that purpose, we use the double-sketch method as considered in \cite{fazel2008compressed}, which outputs
\begin{equation}
X=\tilde{Y}^*(S\tilde{Y}^*)^{\dagger}Y.
\label{eq:doublesketch}
\end{equation}
The matrix $X$ represents an estimate of the target matrix $X_0$.

Note that formula \eqref{eq:doublesketch} has also been studied in \cite[Theorem 4.7]{clarkson2009numerical} in the context of low-rank approximation, i.e., $Z=0$, $\tilde{Z}=0$ and $X_0$ is approximately low-rank. 
(See also Section \ref{sec:lowrankapproximation}, where we apply our results to the setting of noisy low-rank matrix approximation.)
In \cite[Section 4]{tropp_sketching}, an algebraically equivalent, but more numerical stable reformulation of equation \eqref{eq:doublesketch} was described, again for the problem of low-rank matrix approximation. We outline the main ideas of this approach below. Moreover, a similar approach as formula \eqref{eq:doublesketch} has been studied in \cite{woolfe2008fast}. Namely, in that paper instead of \eqref{eq:doublesketch} the formula $X=\tilde{Y}_{r_0}^*(S\tilde{Y}_{r_0}^*)^{\dagger} Y_{r_0} $ has been used, where by $Y_{r_0}$, respectively $\tilde{Y}_{r_0} $, we denote the truncated $r_0$-term singular value decomposition of $Y$, respectively $\tilde{Y}$.

In the noiseless case when $Z=0$, $\tilde{Z}=0$, we denote the output of \eqref{eq:doublesketch} as $\overline{X}$. In this case, the output will read as
\begin{align}\label{eq:output_nonoise}
    \overline{X}=X_0\tilde S^* (SX_0 \tilde {S}^*)^{\dagger} SX_0.
\end{align}

While this work is concerned with the theoretical analysis of the double-sketch method \eqref{eq:doublesketch}, we want to make a few comments regarding
its practical implementation. Our presentation will be based on \cite{tropp_sketching}, which we refer to for more details.
First of all, note that in many applications,  the matrix $X_0$ is too large to be stored on a computer.
A typical scenario, in which the sketching method is used, is a streaming model (see \cite{muthukrishnan2005data}). That is, we have
    \begin{equation*}
    X_0= H_1 + H_2 + H_3 + H_4 +\ldots,
    \end{equation*}
    where the updates $H_i$ must be discarded after they have been processed.
    Typically the matrices $H_i$  possess a simple structure, such as being rank-one. This means that in this scenario, we can obtain the sketches $ SX_0 $ and $\tilde{S} X_0^* $ without ever needing to compute or store the full matrix $X_0$. Note that the sketching method presented in this paper is particularly suited for scenarios where we can only make one pass over the data as in the streaming model.
        
    In order to compute $X$ from formula \eqref{eq:doublesketch} one might obtain the matrix $(S\tilde{Y}^*)^{\dagger}Y $ via its associated least-squares problem.
    However, the matrix $S\tilde{Y}^*$ might be poorly conditioned, leading to an inaccurate solution.
    As suggested in \cite{tropp_sketching}, one can avoid this by considering the unitary-triangular decomposition $ \tilde{Y}^* =QR$, where $Q \in \mathbb{C}^{n_1 \times r}$.
    Then, (if $\tilde{Y}$ has full rank and $SQ \in \mathbb{C}^{r \times r}$ is invertible) we observe that \eqref{eq:doublesketch} is equivalent to
    \begin{equation*}
     X=Q(SQ)^{\dagger}Y.   
    \end{equation*}
    Note that $SQ$ is typically better conditioned than $S\tilde{Y}^*$, 
    implying that $(SQ)^{\dagger}Y$ can be computed more accurately than the corresponding term in~\eqref{eq:doublesketch}. The pseudo-code for this method is provided in Algorithm~\ref{alg:ds_matrix}.
\begin{algorithm}
\caption{Matrix Recovery from Double-Sketch}\label{alg:ds_matrix}
\begin{algorithmic}
\Require Sketches $Y$, $\tilde{Y}$, and sketching matrix $S$ from~\eqref{eq:sketchstep} 
\Ensure $X \in\mathbb{R}^{n_1 \times n_2}$ approximation of target matrix $X_0$
\Function{DoubleSketchMatrixRecovery}{$Y$, $\tilde{Y}$, $S$} 
\State Compute the QR Decomposition of $\tilde{Y}$: $\tilde{Y}^* = QR $ where $Q \in \mathbb{R}^{n_1 \times r}$ and $R \in \mathbb{R}^{r \times r}$
\State $X = Q(SQ)^\dagger Y$
\State \Return $X$
\EndFunction
\end{algorithmic}

\end{algorithm}

The work of \cite{tropp_sketching} suggests, in addition to using the $QR$-decomposition of the sketch $\tilde{Y}^*$, to also use more samples for the co-range sketch $Y$ than for the range sketch $\Tilde{Y}$ to improve the condition number of the matrix $SQ$. In other words, it is suggested that the matrix $Y$ should have more columns than the matrix $\Tilde{Y}$.
While we do not consider this modification in the paper at hand, exploring its noise robustness might be an interesting avenue for future work.
        
In the end, let us stress that in order to access individual entries of $X$, it suffices to compute the corresponding inner products between $Q$ and $(SQ)^{\dagger}Y$. 
In particular,  there is no need to compute or store the complete matrix $X$.

\section{Main results}\label{sec:main}

\subsection{Low-rank matrix recovery from noisy sketches}
In our first result, we show that without noise~\eqref{eq:doublesketch} exactly recovers $X_0$, i.e., $\overline{X}=X_0$ with probability 1. 

\begin{theorem}[Exact recovery] \label{thm:exact}
Let $S\in \mathbb C^{r \times n_1}, \tilde{S}\in \mathbb C^{r\times n_2}$ be two independent complex standard Gaussian random matrices. Furthermore, let $X_0\in \mathbb C^{n_1\times n_2}$ be a  matrix with rank $r_0$. If $r \geq r_0$ and $Z=0$, $\tilde{Z} = 0$ then with probability one $\overline{X}=X_0$,
where $\overline{X}$ is as defined in \eqref{eq:output_nonoise}.
\end{theorem}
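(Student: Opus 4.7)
The plan is to reduce the claim to a computation with the thin SVD of $X_0$ and then apply the standard full-rank factorization formula for the Moore--Penrose pseudoinverse.

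First I would write $X_0 = U_0 \Sigma_0 V_0^*$ with $U_0 \in \mathbb{C}^{n_1\times r_0}$, $V_0 \in \mathbb{C}^{n_2\times r_0}$ having orthonormal columns and $\Sigma_0 \in \mathbb{R}^{r_0\times r_0}$ diagonal and invertible. Setting $A := SU_0$ and $B := \tilde S V_0$, the rotational invariance of the complex Gaussian distribution (together with $U_0^*U_0 = V_0^*V_0 = I_{r_0}$) implies that $A,B\in \mathbb{C}^{r\times r_0}$ are independent standard complex Gaussian matrices. Since $r \geq r_0$, the events $\{\operatorname{rank}(A) = r_0\}$ and $\{\operatorname{rank}(B) = r_0\}$ hold with probability one, because the vanishing of the determinant of any $r_0\times r_0$ minor is a zero-measure polynomial condition on the entries of a Gaussian matrix. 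I will work on this probability-one event for the rest of the proof.

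Next I would rewrite the output as
\begin{equation*}
\overline{X} \;=\; X_0\tilde S^*\,(SX_0\tilde S^*)^{\dagger} SX_0 \;=\; U_0\Sigma_0 B^*\,(A\Sigma_0 B^*)^{\dagger} A\Sigma_0 V_0^*,
\end{equation*}
and factor $M := A\Sigma_0 B^* = P Q$ with $P := A\Sigma_0 \in \mathbb{C}^{r\times r_0}$ and $Q := B^* \in \mathbb{C}^{r_0\times r}$. On our event, $P$ has full column rank $r_0$ (since $A$ does and $\Sigma_0$ is invertible) and $Q$ has full row rank $r_0$. This is precisely the situation in which the pseudoinverse factors as $M^{\dagger} = Q^{\dagger} P^{\dagger}$, with $P^\dagger = (P^*P)^{-1}P^*$ and $Q^\dagger = Q^*(QQ^*)^{-1}$. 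Substituting and using $\Sigma_0^* = \Sigma_0$ gives
\begin{equation*}
(A\Sigma_0 B^*)^{\dagger} \;=\; B(B^*B)^{-1}\Sigma_0^{-1}(A^*A)^{-1}A^*.
\end{equation*}

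Plugging this back in, the $(B^*B)^{-1}$ and $(A^*A)^{-1}$ factors cancel cleanly:
\begin{equation*}
\Sigma_0 B^*(A\Sigma_0 B^*)^{\dagger}A\Sigma_0 \;=\; \Sigma_0\bigl[B^*B(B^*B)^{-1}\bigr]\Sigma_0^{-1}\bigl[(A^*A)^{-1}A^*A\bigr]\Sigma_0 \;=\; \Sigma_0\cdot\Sigma_0^{-1}\cdot\Sigma_0 \;=\; \Sigma_0.
\end{equation*}
Therefore $\overline{X} = U_0\Sigma_0 V_0^* = X_0$, which is the claim.

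The main (and essentially only) obstacle is justifying the identity $(PQ)^\dagger = Q^\dagger P^\dagger$, which requires both $P$ full column rank and $Q$ full row rank; everything else is algebra. I would handle this by citing (or briefly verifying) the standard fact that for $P$ of full column rank and $Q$ of full row rank, $PQ$ is a full-rank factorization of $M$, so its Moore--Penrose inverse is given by the explicit formula above. The almost-sure rank conditions on $A$ and $B$ are the probabilistic input that makes this applicable here, and the condition $r\geq r_0$ is used precisely at that step.
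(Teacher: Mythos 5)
Your proposal is correct and follows essentially the same route as the paper's proof: reduce to the thin SVD of $X_0$, observe that $SU_0$ and $\tilde S V_0$ have full column rank almost surely (this is where $r\geq r_0$ enters), and then cancel via the product rule for the Moore--Penrose inverse of a full-rank factorization. The only cosmetic difference is that you invoke the explicit formula $M^\dagger = Q^*(QQ^*)^{-1}(P^*P)^{-1}P^*$, whereas the paper peels off the factors using the identities $(SU_0)^\dagger(SU_0)=I$ and $(V_0^*\tilde S^*)(V_0^*\tilde S^*)^\dagger=I$ from its pseudo-inverse lemma; both are valid instances of the same fact.
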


Our Theorem \ref{thm:exact} generalized the exact recovery result  \cite[Lemma 6]{fazel2008compressed}, where $r$ is assumed to be exactly $r_0$. Our Theorem \ref{thm:exact} implies the exact value of $r_0$ is not needed for the double sketch algorithm, and one can always use the parameter $r\geq r_0$. In fact, our robust recovery result (Theorem \ref{thm:main}) suggests choosing a larger $r$ makes the output of the double sketch algorithm more robust to noise.

When $Z,\tilde{Z}$ are not all zero, the robust recovery guarantee is given as follows.

\begin{theorem}[Robust recovery]\label{thm:main}
Let $S\in \mathbb C^{r \times n_1}$, $\tilde{S}\in \mathbb C^{r \times n_2}$ be two independent standard complex Gaussian matrices. Let  $Z  \in \mathbb C^{r \times n_2}$ be any matrix, and  $\tilde{Z} \in \mathbb C^{r \times n_1}$ be a matrix such that $(\tilde{S},\tilde{Z})$ is independent of $S$, and $\tilde{Y}=\tilde{S}X^*_0 + \tilde{Z}$ is almost surely of rank $r$ with $r_0<r< n_1$. For any $\delta_1,\delta_2,\epsilon>0$ such that $1 > \delta_2 > \exp \left(- \left( \sqrt{r} - \sqrt{r_0} \right)^2 \right) $ and $\epsilon <1$, with probability at least $1-\delta_1-\delta_2-\epsilon$ over the randomness of $\tilde{S}$ and $S$, the output $X$ from Algorithm \ref{alg:ds_matrix} satisfies
\begin{align*}
\gennorm{X - X_0 } 
    &\leq  \frac{\sqrt{r(n_1-r)} \gennorm{\tilde{Z}}}{\sqrt{\delta_1}(\sqrt{r}-\sqrt{r_0}-\sqrt{\log(1/\delta_2)})} + \frac{\sqrt{r} \gennorm{Z}}{\sqrt{\log(1/(1-\epsilon))}},
\end{align*}
where $ \gennorm{\cdot}$ is any matrix norm that satisfies for any two matrices $A$ and $B$, $ \gennorm{AB} \leq \specnorm{A} \gennorm{B}$, and $ \gennorm{A}=\gennorm{A^*}$. 
In particular, it holds for $\specnorm{\cdot}$ and $\|\cdot \|_F$.  
\end{theorem}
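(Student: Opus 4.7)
The plan is to derive an exact formula for $X-X_0$ via the QR decomposition of $\tilde Y^*$ and reduce the estimate to three independent random matrix quantities. Since $\tilde Y^*$ has rank $r$ almost surely, write $\tilde Y^* = QR$ with $Q \in \mathbb{C}^{n_1\times r}$ having orthonormal columns and $R$ upper-triangular invertible, so that Algorithm~\ref{alg:ds_matrix} produces $X = Q(SQ)^{-1}Y$. Substituting $Y = SX_0 + Z$ and splitting $X_0 = QQ^*X_0 + P_\perp X_0$, where $P_\perp := I - QQ^*$, yields the identity $X - X_0 = -\Pi P_\perp X_0 + Q(SQ)^{-1}Z$ with $\Pi := I - Q(SQ)^{-1}S$. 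Submultiplicativity of $\gennorm{\cdot}$ together with $\specnorm{Q}=1$ then gives
\[
\gennorm{X - X_0}\;\leq\;\specnorm{\Pi P_\perp}\,\gennorm{P_\perp X_0} \;+\; \specnorm{(SQ)^{-1}}\gennorm{Z}.
\]

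For $\gennorm{P_\perp X_0}$, the relation $QQ^*\tilde Y^* = \tilde Y^*$ gives $P_\perp X_0\,\tilde S^* = -P_\perp \tilde Z^*$; combining this with the thin SVD $X_0 = U_0\Sigma_0 V_0^*$ (so the rows of $P_\perp X_0$ lie in the row span of $V_0^*$) and the fact that $V_0^*\tilde S^* \in \mathbb{C}^{r_0\times r}$ has full row rank almost surely, I invert on the right by $(V_0^*\tilde S^*)^\dagger$ to obtain the closed form $P_\perp X_0 = -P_\perp \tilde Z^*(V_0^*\tilde S^*)^\dagger V_0^*$, whence $\gennorm{P_\perp X_0} \leq \gennorm{\tilde Z}/\sigma_{\min}(V_0^*\tilde S^*)$. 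By rotational invariance, $V_0^*\tilde S^*$ is a standard $r_0\times r$ i.i.d.\ complex Gaussian matrix, so a Davidson--Szarek-type lower bound yields $\sigma_{\min}(V_0^*\tilde S^*) \geq \sqrt r - \sqrt{r_0} - \sqrt{\log(1/\delta_2)}$ with probability $\geq 1 - \delta_2$ (positive under the assumed lower bound on $\delta_2$).

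The crucial linear-algebra step is the identity
\[
\specnorm{\Pi P_\perp} \;=\; 1/\sigma_{\min}(V^*Q),
\]
where $V\in\mathbb{C}^{n_1\times r}$ is the right Stiefel factor in the SVD $S = U\Sigma V^*$. Since $\Pi Q = 0$, the operator-norm supremum is attained at $v = Q_\perp w$; orthogonality of $\mathrm{col}(Q)$ and $\mathrm{col}(Q_\perp)$ gives $\|\Pi Q_\perp w\|^2 = \|w\|^2 + \|(SQ)^{-1}SQ_\perp w\|^2$, so $\specnorm{\Pi P_\perp}^2 = 1 + \specnorm{(SQ)^{-1}SQ_\perp}^2$. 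Cancelling $U\Sigma$ reduces $(SQ)^{-1}SQ_\perp$ to $A^{-1}B$ with $A := V^*Q$ and $B := V^*Q_\perp$; the Stiefel constraint $V^*V = I_r$ translates, in the basis given by $[Q,Q_\perp]$, to $AA^* + BB^* = I_r$, and a direct computation gives $A^{-1}BB^*A^{-*} = (A^*A)^{-1} - I_r$, so $\specnorm{A^{-1}B}^2 = \sigma_{\min}(A)^{-2} - 1$ and the identity follows. Because $S$ is independent of $(\tilde S,\tilde Z)$ and the Haar Stiefel distribution is left-rotation invariant, conditionally on $Q$ the matrix $V^*Q$ has the distribution of the top-left $r\times r$ sub-block of a Haar-random $n_1\times n_1$ unitary; invoking the smallest-singular-value tail bound for such truncated Haar matrices from the references cited in the introduction yields $\sigma_{\min}(V^*Q)\geq \sqrt{\delta_1/(r(n_1-r))}$ with probability $\geq 1-\delta_1$, i.e.\ $\specnorm{\Pi P_\perp}\leq \sqrt{r(n_1-r)/\delta_1}$.

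Finally, conditionally on $Q$ the matrix $SQ\in\mathbb{C}^{r\times r}$ is i.i.d.\ standard complex Gaussian by rotation invariance, and the classical formula $\P(\sigma_{\min}(SQ)^2\geq t) = e^{-rt}$ for the square complex Ginibre ensemble yields $\specnorm{(SQ)^{-1}} \leq \sqrt{r/\log(1/(1-\epsilon))}$ with probability $\geq 1-\epsilon$. A union bound over the three good events completes the proof. The main obstacle is the identity $\specnorm{\Pi P_\perp} = 1/\sigma_{\min}(V^*Q)$: without this clean reduction, a naive bound via $\specnorm{(SQ)^{-1}SQ_\perp}\leq \fronorm{SQ_\perp}/\sigma_{\min}(SQ)$ would couple the $Z$- and $\tilde Z$-error terms through $\sigma_{\min}(SQ)^{-1}$ and give a strictly worse estimate in which $\epsilon$ multiplicatively interacts with $\delta_1$; obtaining the sharp constant in the truncated-Haar tail bound is then the remaining technical piece.
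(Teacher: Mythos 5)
Your proposal is correct and follows essentially the same route as the paper's proof: the same decomposition $X-X_0=-\Pi X_0+Q(SQ)^{-1}Z$ (your $Q$ is the paper's $U_A$), the same reduction of the $\tilde Z$-term to $\sigma_{\min}$ of a truncated Haar unitary block times $\sigma_{\min}(\tilde S V_0)$, and the same square-Gaussian bound for the $Z$-term, yielding identical constants and the same $1-\delta_1-\delta_2-\epsilon$ union bound. The only (cosmetic) difference is that you derive the oblique-projection norm identity $\specnorm{\Pi P_\perp}=1/\sigma_{\min}(V^*Q)$ from scratch via $AA^*+BB^*=I$, whereas the paper invokes the explicit oblique-projection formula $PX_0=\tilde V(U_{A,\perp}^*\tilde V)^{-1}U_{A,\perp}^*X_0$ and bounds $1/\sigma_{\min}(U_{A,\perp}^*\tilde V)$ — by the CS decomposition these are the same random quantity, so the two arguments coincide.
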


  The condition  that $(\tilde{S},\tilde{Z})$ is independent of $S$  is important in our proof to apply the least singular value bound for truncated Haar random matrices, see Lemma \ref{lem:unitary_truncation}.
    The condition that $\tilde{Y}$ is of rank $r$ can be easily verified in different settings. For example, it holds when $\tilde{Z}$ is Gaussian noise independent of $\tilde{S}$, or $\tilde{Y}=\tilde{S} (X_0^*+\hat{Z})$, where $X_0^*+ \hat{Z}$ is of full rank.

Our proof of Theorem \ref{thm:main} works for $r=r_0$ or $r=n_1$, but the error bounds are slightly different.
\begin{itemize}
    \item When $r=r_0<n_1$, \eqref{eq:minSV0} is replaced by the estimate that
$
    \sigma_{\min}(\tilde{S}V_0)\geq \sqrt{\log(1/(1-\delta_2))} r^{-1/2}
$
with probability $1-\delta_2$. This implies with probability at least $1-\delta_1-\delta_2-\epsilon$,
\begin{align*}
\gennorm{X - X_0 } 
    &\leq  \frac{r\sqrt{n_1-r} \gennorm{\tilde{Z}}}{\sqrt{\delta_1  (\log(1/(1-\delta_2)) }} + \frac{\sqrt{r} \gennorm{Z}}{\sqrt{\log(1/(1-\epsilon))}}.
\end{align*}
\item When $r=n_1$, $U_A$ defined in \eqref{eq:SVD_of_A} is a unitary matrix and $S$ is invertible. We find $\tilde{X}=X_0$ in \eqref{eq:tildeX-X_0} and following the rest of the proof, we obtain with probability at least $1-\epsilon$, 
\[  \gennorm{X - X_0 } \leq \frac{\sqrt{n_1} \gennorm{Z}}{\sqrt{\log(1/(1-\epsilon))}}.
\]
Note that in the case when $r=n_1$, the error is independent of $\tilde{Z}$. This is because in \eqref{eq:doublesketch}, when $S$ is invertible, from part (1) in  Lemma \ref{lem:pinverse}, the expression  becomes \[X=\tilde{Y}^*{(\tilde{Y}^*)}^{\dagger}S^{-1} Y=S^{-1} Y,\] which is independent of $\tilde{Z}$.
\end{itemize}

\subsection{Low-rank matrix approximation from noisy sketches}\label{sec:lowrankapproximation}

Note that Theorem \ref{thm:main} uses the assumption that the rank of the ground-truth matrix $X_0$ is exactly $r_0$. However, in most practical scenarios the matrix $X_0$ is only approximately  low-rank, i.e., the remaining singular values decay quickly to zero.
Indeed, there is a large literature focusing on single-pass sketching algorithms for low-rank approximation (see, e.g., \cite{woolfe2008fast,tropp_sketching,tropp_sketching2}).
However, to the best of our knowledge, none of these works consider the scenario that there is additive noise on the sketched matrices.

In the following, we describe how one can utilize Theorem \ref{thm:main} to also obtain a bound for the approximation error for the scenario that $X_0$ is approximately low-rank and that there is additive noise. In fact, when $X_0$ is not low-rank, we can write $X_0=X_1 + E$, where $X_1$ is the best rank-$r_1$ approximation of $X_0$. Letting $r_0>r > r_1$, we can use the noisy double sketch model in \eqref{eq:sketchstep} to consider the sketches 
\begin{align*}
  Y &= SX_0+Z= SX_1 + (SE+ Z), \\
  \tilde{Y}&= \tilde{S}X_0^*+\tilde{Z}= \tilde{S}X_1^* + ( \tilde{S}  E^* +\tilde{Z}). 
\end{align*}

By interpreting $SE+ Z$ and $\tilde{S}  E^* +\tilde{Z}$ as perturbations of the ``true" sketches $ SX_1 $ and $ \tilde{S}X_1^*  $, we can apply Theorem \ref{thm:main}, which yields Corollary \ref{cor:lowrankapprox}.
The proof of Corollary \ref{cor:lowrankapprox} is given in Section \ref{sec:proof_cor}.

 \begin{corollary}[Low-rank approximation with noisy sketches]\label{cor:lowrankapprox}
For $X_0\in \mathbb C^{n_1\times n_2}$  and  an integer $r_1$ with $r_1<r<n_1$, consider the sketches 
 \[  Y=SX_0+Z, \quad \tilde{Y}=\tilde{S}X_0^*+\tilde{Z},
   \]
where $S \in \mathbb{C}^{r \times n_1}$ and $\tilde{S}\in \mathbb{C}^{r \times n_2}$ are two independent complex Gaussian random matrices and $Z \in \mathbb{C}^{r \times n_2}$ and $\tilde{Z} \in \mathbb{C}^{r \times n_1}$ are noise matrices.
Moreover, assume that $S$ is independent of $\tilde{Z}$.
Recall that the output of Algorithm \ref{alg:ds_matrix} is given by 
\begin{equation*}
     X=\tilde{Y}^*(S\tilde{Y}^*)^{\dagger}Y.
\end{equation*}
 For any $\delta_1, \delta_2,\epsilon>0$ such that $1\ge\delta_2>\exp(-(\sqrt{r}-\sqrt{r_1})^2)$ and $\epsilon <1$, with probability at least $1-\delta_1-3\delta_2-\epsilon$ and when $\tilde{Y}$ is of rank $r$, the output $X$  satisfies
\small \begin{align*}
 &\specnorm{X-X_0}\leq  \sigma_{r_1+1} \left( X_0 \right)  \\
   & +\sigma_{r_1+1}\left( X_0 \right) \left(\frac{\sqrt{r(n_1-r)}(\sqrt{r}+\sqrt{n_2}+\sqrt{\log(1/\delta_2)}) }{\sqrt{\delta_1}(\sqrt{r}-\sqrt{r_1}-\sqrt{\log(1/\delta_2)})} + \frac{\sqrt{r} \left(\sqrt{r}+\sqrt{n_1}+\sqrt{\log(1/\delta_2)}\right)}{\sqrt{\log(1/(1-\epsilon))}} \right)\\
   &+\frac{\sqrt{r(n_1-r)} \specnorm{\tilde{Z}}}{\sqrt{\delta_1}(\sqrt{r}-\sqrt{r_1}-\sqrt{\log(1/\delta_2)})} + \frac{\sqrt{r} \specnorm{Z}}{\sqrt{\log(1/(1-\epsilon))}}.
 \end{align*}
 \end{corollary}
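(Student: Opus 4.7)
\textbf{Proof proposal for Corollary \ref{cor:lowrankapprox}.}

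The plan is to reduce the approximate low-rank setting to the exact low-rank setting already handled by Theorem \ref{thm:main}, by splitting $X_0$ into a best rank-$r_1$ piece plus a residual and absorbing the residual into the effective noise. Concretely, let $X_1$ be the best rank-$r_1$ approximation of $X_0$ in spectral norm (say, from its truncated SVD) and set $E:=X_0-X_1$, so that $\specnorm{E}=\sigma_{r_1+1}(X_0)$. Then the sketches in the hypothesis can be rewritten as
\begin{align*}
Y &= SX_1 + Z', & Z' &:= SE + Z, \\
\tilde Y &= \tilde S X_1^* + \tilde Z', & \tilde Z' &:= \tilde S E^* + \tilde Z,
\end{align*}
which is exactly the model of Theorem \ref{thm:main} with target matrix $X_1$ (of rank $r_1<r<n_1$) and additive noises $Z',\tilde Z'$. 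The independence hypothesis transfers: since $(\tilde S,\tilde Z)$ is independent of $S$ and $E$ is deterministic, $(\tilde S,\tilde Z')$ is independent of $S$ as well, and $\tilde Y$ is assumed to have rank $r$.

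Applying Theorem \ref{thm:main} with the spectral norm (which satisfies $\specnorm{AB}\leq\specnorm{A}\specnorm{B}$ and $\specnorm{A}=\specnorm{A^*}$), I get that with probability at least $1-\delta_1-\delta_2-\epsilon$,
\begin{equation*}
\specnorm{X - X_1} \leq \frac{\sqrt{r(n_1-r)}\,\specnorm{\tilde Z'}}{\sqrt{\delta_1}(\sqrt{r}-\sqrt{r_1}-\sqrt{\log(1/\delta_2)})} + \frac{\sqrt{r}\,\specnorm{Z'}}{\sqrt{\log(1/(1-\epsilon))}}.
\end{equation*}
Next, I use the triangle inequality to peel off the two deterministic pieces: $\specnorm{Z'}\leq\specnorm{S}\specnorm{E}+\specnorm{Z}$ and $\specnorm{\tilde Z'}\leq\specnorm{\tilde S}\specnorm{E}+\specnorm{\tilde Z}$. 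To control $\specnorm{S}$ and $\specnorm{\tilde S}$, I invoke the standard non-asymptotic spectral-norm bound for complex Gaussian matrices (one of the auxiliary lemmas in the Appendix), which gives $\specnorm{S}\leq\sqrt{r}+\sqrt{n_1}+\sqrt{\log(1/\delta_2)}$ with probability $\geq 1-\delta_2$ and the analogous bound $\specnorm{\tilde S}\leq\sqrt{r}+\sqrt{n_2}+\sqrt{\log(1/\delta_2)}$ with probability $\geq 1-\delta_2$.

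A final triangle inequality $\specnorm{X-X_0}\leq\specnorm{X-X_1}+\specnorm{E}=\specnorm{X-X_1}+\sigma_{r_1+1}(X_0)$, together with a union bound over the four failure events (contributing $\delta_1+\delta_2+\epsilon$ from Theorem \ref{thm:main} and two additional $\delta_2$ from the Gaussian spectral-norm tail bounds, for a total of $\delta_1+3\delta_2+\epsilon$), substitutes the above estimates and produces exactly the stated inequality, with the $\sigma_{r_1+1}(X_0)$ factor arising from $\specnorm{E}$ multiplied by the two prefactors in the Theorem \ref{thm:main} bound plus the additional $+1$ coming from the outer triangle inequality. No step is particularly delicate: the main points to be careful with are (i) verifying that the independence of $S$ from $(\tilde S,\tilde Z')$ is preserved after the splitting, and (ii) correctly apportioning the probability budget, using a single $\delta_2$ for the three concurrent events (truncated-Haar least singular value in Theorem \ref{thm:main} and the two Gaussian operator-norm bounds) so that the total failure probability matches $1-\delta_1-3\delta_2-\epsilon$.
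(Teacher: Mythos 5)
Your proposal is correct and follows essentially the same route as the paper's own proof: the same decomposition $X_0=X_1+E$ absorbing $SE$ and $\tilde S E^*$ into the effective noise, the same application of Theorem \ref{thm:main} to $X_1$, the same triangle inequalities, the same use of the Gaussian operator-norm bound (Lemma \ref{lem:Gordon}) for $\specnorm{S}$ and $\specnorm{\tilde S}$, and the same probability accounting yielding $1-\delta_1-3\delta_2-\epsilon$. Your explicit check that the independence of $S$ from $(\tilde S,\tilde Z')$ survives the splitting is a welcome detail the paper leaves implicit.
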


\begin{remark}
When $Z=0$ and $\tilde{Z}=0$ and $X_0$ is approximately low-rank, in \cite{woolfe2008fast} and \cite{tropp_sketching}, two different algorithms have been proposed to approximate the low-rank matrix $X_0$ using the two sketches $Y$ and $\tilde{Y}$.
In the noiseless setting, Corollary \ref{cor:lowrankapprox} below yields a weaker error bound when compared to the bounds in \cite{woolfe2008fast} and \cite{tropp_sketching}. However, our result can handle noise in the two sketches $Y$ and $\tilde{Y} $, while the proofs from \cite{woolfe2008fast,tropp_sketching} are not applicable.
In fact, the proofs in \cite{woolfe2008fast,tropp_sketching} heavily rely on the assumption $Z=0$ and $\tilde{Z}=0$ to use properties of orthogonal projections, which only hold in this noiseless scenario. See
for example \cite[Fact A.2]{tropp_sketching}.
\end{remark}

The error bound in Corollary~\ref{cor:lowrankapprox} is written as a sum of three terms. The first term $\sigma_{r_1+1}(X_0)$ is the error from the best rank-$r_1$ approximation of  {$X_0$}, denoted by $X_1$ in the proof of Corollary~\ref{cor:lowrankapprox}. The second term comes from comparing $X_1$ with the noiseless sketch of $X_0$. The third term is the error from the additive noise $Z$ and $\tilde{Z}$ in the noisy sketch of $X_1$. When the rank of $X_0$ is $r_1$, the first two error terms are $0$, and Corollary~\ref{cor:lowrankapprox} reduces to Theorem~\ref{thm:main}.
The bound in Corollary \ref{cor:lowrankapprox}  is true for any $r_1<r$. Therefore one can optimize $r_1$ to find the best bound in terms of the failure probability and the approximation error.

\subsection{Application to sketching low-tubal-rank tensors}

\begin{algorithm}[t]
\caption{Tensor Recovery from Double-Sketch}\label{alg:ds_tensor}
\begin{algorithmic}[0]
\Require Sketches $\mathcal{Y}$, $\widetilde{\mathcal{Y}}$, and sketching tensor $\mathcal{S}$ from~\eqref{eq:tensordoublesketch} 
\Ensure $\mathcal{X} \in\mathbb{R}^{n_1 \times n_2 \times n_3}$ approximation of target tensor $\mathcal{X}_0$
\Function{DoubleSketchTensorRecovery}{$\mathcal{Y}$, $\widetilde{\mathcal{Y}}$, $\mathcal{S}$} 
\State Compute $\hat{\mathcal{Y}}$, $\widehat{\widetilde{\mathcal{Y}}}$, $\hat{\mathcal{S}}$ via mode-3 FFT  \Comment{See Definition~\ref{def:mode3fft}.}
\For{$i \in \{1,\cdots, n_3 \}$}
\State $\widehat{\mathcal{X}}_i=\textsc{DoubleSketchMatrixRecovery}(\widehat{\mathcal{Y}}_i,\widehat{\widetilde{\mathcal{Y}}}_i, \widehat{\mathcal{S}}_i)$  \Comment See Algorithm~\ref{alg:ds_matrix}.
\EndFor
\State Compute $\mathcal{X}$ from $\widehat{\mathcal{X}}$ via the mode-3 inverse FFT
\State \Return $\mathcal{X}$
\EndFunction
\end{algorithmic}
\end{algorithm}

The approach set forth in \eqref{eq:doublesketch} can be used to sketch and recover low-tubal-rank tensors. For such an application, one considers the low-tubal-rank tensor $ \mathcal{X}_0 \in \mathbb{R}^{n_1 \times n_2 \times n_3}$ with tubal rank $r_0$. Taking the mode-3 FFT of $ \mathcal{X}_0$, one obtains $  \hat{\mathcal{X}}_0 $ which is composed of a collection of $n_3$ matrices (frontal slices) of dimension $n_1 \times n_2$ with rank at most $r_0$. As such, \eqref{eq:doublesketch} can be used to sketch each of the $n_3$ frontal slices of $ \mathcal{X}_0$. Corollary~\ref{cor:tensors} presents guarantees for the approximation error and the proof of Corollary~\ref{cor:tensors} is provided in Section \ref{sec:proof_cor}. 

The recovery of target tensor $\mathcal{X}_0$ from double sketches~\eqref{eq:tensordoublesketch} is outlined in Algorithm~\ref{alg:ds_tensor}. In Corollary~\ref{cor:tensors}, we consider sketching tensors $\mathcal{S}$ and $\mathcal{\tilde{S}}$ such that $\mathcal{S}_1 = S$, $\mathcal{\tilde{S}}_1 = \tilde{S}$ and $\mathcal{S}_k =\textbf{0}$, $\mathcal{\tilde{S}}_k = \textbf{0}$ for all $k \in \{2, \dots, n_3\}$ where $S$ and $\tilde{S}$ are complex standard Gaussian matrices. It should be noted that $\widehat{\mathcal{S}}$ in Algorithm~\ref{alg:ds_tensor} need not be computed and stored explicitly for this choice of $\mathcal{S}$. This is because for any tensor $\mathcal{M}$ such that $\mathcal{M}_k = 0$, for $k \in \{ 2, ..., n_3\}$, $\widehat{\mathcal{M}}_k = \mathcal{M}_1$ for all $k \in \{ 1, ..., n_3 \}$. Thus, instead of needing to compute and store $\mathcal{\widehat{S}}$, one simply needs to store $\mathcal{S}_1$, the first frontal slice of the sketching tensor.

\begin{corollary}[Recovering low tubal-rank tensors]\label{cor:tproduct_sketch}
Let $\mathcal{X}_0 \in \mathbb{C}^{n_1\times n_2 \times n_3}$ be a low-tubal-rank tensor with rank $r_0$. Furthermore, let $S\in \mathbb C^{r\times n_1}, \tilde{S}\in \mathbb C^{r\times n_2}$ be two independent complex standard Gaussian random matrices
and $ \mathcal{Z} \in \mathbb{R}^{ r \times n_2 \times n_3} $ and $ \mathcal{\tilde{Z}}  \in \mathbb{R}^{ r \times n_1 \times n_3} $ be two noise tensors such that $\mathcal{\tilde{Z}}  $ is stochastically independent of $ S $.
Consider the measurements 
\begin{align}
    \mathcal{Y} &= \mathcal{S} * \mathcal{X}_0 + \mathcal{Z}, \,\,\, \mathcal{\tilde{Y}} = \mathcal{\tilde{S}} * \mathcal{X}_0^* + \mathcal{\tilde{Z}}, 
    \label{eq:tensordoublesketch}
\end{align}
where $\mathcal{S}_1 = S$, $\mathcal{\tilde{S}}_1 = \tilde{S}$ and $\mathcal{S}_k =\textbf{0}$, $\mathcal{\tilde{S}}_k = \textbf{0}$ for all $k \in \{2, \dots, n_3\}$. Furthermore, consider the estimate $\mathcal{X}$ of the target tensor $\mathcal{X}_0$ defined as:
\begin{equation}
    \mathcal{X} = \tilde{\mathcal{Y}}^* * (\mathcal{S} * \tilde{\mathcal{Y}}^*)^\dagger *\mathcal{Y},
    \label{eq:tensorX}
\end{equation}
where $\mathcal{M}^\dagger$ denotes the Moore-Penrose pseudo-inverse of the tensor $\mathcal M$~\cite{jin2017generalized}.
 \begin{enumerate}
    \item\textit{(Exact recovery)} If $\mathcal{Z} =\textbf{0} $ and $\mathcal{\tilde{Z}} = \textbf{0}$ then with probability 1, $\mathcal{X} = \mathcal{X}_0$.
    \item\textit{(Robust recovery)} If $r_0 < r < n_1$ and for all $k \in [n_3]$,
$\hat{\tilde{\mathcal{Y}}}_k$ is of rank $r$, then for any $\delta_1, \delta_2,\epsilon>0$ such that $1 > \delta_2>\exp(-(\sqrt{r}-\sqrt{r_0})^2)$ and $\epsilon<1$, with probability at least $1-(\delta_1+\delta_2 +\epsilon)n_3$,
\begin{align}
    \|\mathcal{X} - \mathcal{X}_0 \|_F^2 \leq 
    \frac{2{r(n_1-r)}\|{\tilde{\mathcal{Z}}}\|_F^2}{{\delta_1}(\sqrt{r}-\sqrt{r_0}-\sqrt{\log(1/\delta_2)})^2}  + \frac{2{r}\|{\mathcal{Z}}\|_F^2}{{\log(1/(1-\epsilon))}}.
    \label{eq:robustrecoverytensor}
\end{align}
\item \textit{(Low tubal-rank approximation with noisy sketches)} 
Let $r_1$ be an integer and define the error tensor $\mathcal{E}$ such that $\mathcal{X}_0 = \mathcal{X}_1 + \mathcal{E}$ where $\mathcal{X}_1$ is the best rank $r_1$ approximation of $\mathcal{X}_0$. If $r_1 < r < n_1$ and for all $k \in [n_3]$, $\hat{\tilde{\mathcal{Y}}}_k$ is of rank $r$, then for any $\delta_1, \delta_2, \epsilon > 0$ such that $1 \geq \delta_2 > \exp{-(\sqrt{r} - \sqrt{r_1})^2}$ and $\epsilon < 1$, with probability at least $1 - (\delta_1 - \delta_2 - \epsilon)n_3-2\delta_2$, 
\small\begin{align*}
    &\|\mathcal{X} - \mathcal{X}_0 \|_F^2 \leq 2\|\mathcal{E} \|^2_F  \cdot  \\
   & \left(\frac{4r(n_1-r)\left (\sqrt{r}+\sqrt{n_2}+\sqrt{\log(1/\delta_2)}\right)^2 }{{\delta_1}\left(\sqrt{r}-\sqrt{r_1}-\sqrt{\log(1/\delta_2)}\right)^2} + \frac{4{r} \left(\sqrt{r}+\sqrt{n_1}+\sqrt{\log(1/\delta_2)}\right)^2}{{\log(1/(1-\epsilon))}} +1 \right)\\
   &+\frac{{8r(n_1-r)} \|{\tilde{\mathcal{Z}}\|^2_F}}{{\delta_1}(\sqrt{r}-\sqrt{r_1}-\sqrt{\log(1/\delta_2)})^2} + \frac{{8r} \|{\mathcal{Z}}\|_F^2}{{\log(1/(1-\epsilon))}}. 
   \end{align*}
   \normalsize
\end{enumerate}
\label{cor:tensors}
\end{corollary}

The low tubal-rank approximation with noisy sketches error bound in Corollary~\ref{cor:tensors} depends on $\| \mathcal{E} \|^2_F$ where $\mathcal{E} = \mathcal{X}_0 - \mathcal{X}_1$ and $\mathcal{X}_1$ is the best tubal rank $r_1$ approximation of $\mathcal{X}_0$.
It is well known that the t-product satisfies a tensor Eckart-Young Theorem, i.e., the $k$-truncated t-SVD produces the best tubal rank $k$ approximation of a given matrix~\cite{kilmer2011factorization}. Thus the error term depends on the tail norms of the singular tubes of each frontal slice of $\mathcal{X}_0$. In particular, we have that 
\begin{align}
    \| \mathcal{E} \|^2_F &= \sum_{i=r_1 +1}^{\min(n_1, n_2)} \| {\mathcal{S}}_{i,i,:}\|^2,
\end{align}
where $\mathcal{X}_0 = \mathcal{U} * \mathcal{S}{*} \mathcal{V}^{{*}}$ is the t-SVD of $\mathcal{X}_0$ as defined in  {Definition~\ref{def:tsvd}}.

\begin{remark} In closely related work \cite{qi2021t}, the authors considered low-tubal-rank tensor approximation from noiseless sketches, adapted and applied single-pass matrix sketching algorithms from  \cite{tropp_sketching}, while our setting here is to recover low-tubal-rank tensors from noisy sketches. The results in \cite{qi2021t} provided expected error bounds in the noiseless case, while our work provides probabilistic error bounds in the noisy case.
\end{remark}

\section{Proof of main results}\label{sec:proof}
Our proof for robust matrix recovery, presented in Theorem \ref{thm:main}, derives an upper bound for the difference between the output $X$ and the ground truth matrix $X_0$. To accomplish this, the expression, $X-X_0 $, is decomposed into two summands. The first summand, which is responsible for the first error term in Theorem \ref{thm:main}, depends on $\tilde{Z}$ (and not on $Z$) and can be written as $PX_0$ for a projection matrix $P$. Next, we use the oblique projection matrix expression in Lemma \ref{lem:projection} to simplify the expression. We then control the error by relating it to the smallest singular value of a truncated Haar unitary matrix. Here we use the crucial fact that when $\tilde{Y}$ is full-rank, $\textnormal{Im}(P)=\ker{S}$ is uniformly distributed on the Grassmannian of all $(n-r)$-dimensional subspaces in $\mathbb C^n$. When $\tilde{Y}$ is not full rank, $\textnormal{Im}(P)$ does not have that property, and our proof technique cannot be directly applied. This part of the proof is summarized in Lemma \ref{lem:Z_1=0}. 

The second summand in the decomposition of $X-X_0$, which is responsible for the second error term in Theorem \ref{thm:main}, is due to the noise term $Z$, and is simpler to handle. For this part, a lower bound on the smallest singular value of Gaussian random matrices is utilized.

The distribution of the smallest singular value of truncated Haar unitary matrices was explicitly calculated in \cite{dumitriu2012smallest,banks2020pseudospectral}. For a more general class of random matrices (including Haar orthogonal matrices), such distribution was derived in \cite{dumitriu2012smallest,edelman2008beta,ballard2019generalized} in terms of  generalized
hypergeometric functions. By using the corresponding tail probability bound \cite[Corollary 3.4]{ballard2019generalized} for truncated Haar orthogonal matrices, our analysis can be extended to real Gaussian sketching matrices $S$ and $\tilde{S}$. Our proof techniques rely heavily on the  structure of  Gaussian matrices with i.i.d. entries. For more structured sketching matrices or sub-Gaussian sketching matrices, our current approach can not be directly applied.

\subsection{Proof of Theorem \ref{thm:exact}}
\begin{proof}
Let $X_0=U_0\Sigma_0 V_0^*$ be the SVD of $X_0$ where $U_0$ is $n_1\times r_0$, $V_0$ is $n_2\times r_0$ and $\Sigma_0$ is an invertible, diagonal $r_0\times r_0$ matrix. 
Now we can write $\overline{X}$ as
\begin{align}
    \overline{X}&=U_0\Sigma_0 V_0^* \tilde{S}^* (SU_0\Sigma_0 V_0^*\tilde{S}^*)^{\dagger} SU_0\Sigma_0 V_0^*. \notag
\end{align}
Note that since $S,\tilde{S}$ are Gaussian matrices and since $U_0$ and $V_0$ are orthonormal, $SU_0\in \mathbb C^{r\times r_0}$ and $\tilde{S}V_0\in \mathbb C^{r\times r_0}$  have linearly independent columns with probability 1 and by Property (1) in Lemma \ref{lem:pinverse},
\[ (SU_0)^{\dagger}SU_0=I, \quad V_0^*\tilde{S}^* ( V_0^*\tilde{S}^*)^{\dagger}=I.
\] So with probability 1,
\begin{align}
    \overline{X}&=U_0\Sigma_0 V_0^*\tilde{S}^* ( V_0^*\tilde{S}^*)^{\dagger}\Sigma_0^{-1} (SU_0)^{\dagger} (SU_0) \Sigma_0 V_0^*= U_0 \Sigma_0 V_0^*=X_0. \notag
\end{align}
\end{proof}

\subsection{Proof of Theorem \ref{thm:main}} 
The double sketch algorithm outputs
\[ X=\tilde{Y}^* (S\tilde{Y}^*)^{\dagger} (SX_0 + Z)=\tilde X+\tilde{Y}^*  (S\tilde{Y}^*)^{\dagger} Z \]
where 
\begin{equation}\label{Xtildedefinition}
\tilde{X} :=\tilde{Y}^*(S\tilde{Y}^*)^{\dagger}SX_0.
\end{equation}
Let $A$ be such that
\[A:=\tilde{Z}^*+X_0\tilde{S}^* \in \mathbb C^{n_1\times r}.
\]
Since $A=\tilde{Y}^*$ is of rank $r$ from the assumption in Theorem \ref{thm:main}, we denote the SVD of $A$ as
\begin{align}
    A&=U_A\Sigma_A V_A^*,  \notag \\
    U_A&\in \mathbb C^{n_1\times r}, \Sigma_A\in \mathbb C^{r\times r},  V_A\in \mathbb C^{r\times r}.\label{eq:SVD_of_A}
\end{align}
Furthermore, since $S$ is independent of  $(\tilde S, \tilde{Z})$, $SU_A$ is invertible with probability $1$. Therefore,
\begin{align}
  \tilde{Y}^*(S\tilde{Y}^*)^{\dagger} &=A(SA)^{\dagger} \notag \\
  &=(U_A\Sigma_AV_A^*)(S U_A\Sigma_AV_A^*)^{\dagger}\notag\\
  &=U_A\Sigma_A V_A^*V_A\Sigma_A^{-1}(SU_A)^{-1}\notag\\
  &=U_A(SU_A)^{-1},\label{eq:AUA}
\end{align}
where in the third equality, we use  the fact that $SU_A$ is invertible to get
\begin{equation*}
\left( S U_A\Sigma_AV_A^*\right)^{\dagger}= V_A(S U_A\Sigma_A)^\dagger=V_A\Sigma_A^{-1} (SU_A)^{-1}.
\end{equation*}
Using this notation, the output of \eqref{eq:doublesketch} simplifies to 
\[ X=\tilde X+\tilde{Y}^*  (S\tilde{Y}^*)^{\dagger}  Z = \tilde{X} + U_A(SU_A)^{-1}Z.\]
Then 
\[ X-X_0=\tilde X-X_0+\tilde{Y}^*(S\tilde{Y}^*)^{-1}Z = (\tilde{X}-X_0) + U_A(SU_A)^{-1}Z,
\]
and since our goal is to bound the approximation error, we consider 
\begin{align*}
  \gennorm{X - X_0 } &\leq \gennorm{\tilde X-X_0} + \gennorm{U_A(SU_A)^{-1} Z }.
\end{align*}
Lemma~\ref{lem:Z_1=0} allows us to bound the first term on the right-hand side of the inequality above.

\begin{lemma}\label{lem:Z_1=0}
With probability at least $1-\delta_1-\delta_2$, where $\delta_1>0$, and $ 1 \ge \delta_2>\exp(-(  \sqrt{r}-\sqrt{r_0}  )^2)$, the matrix $\tilde{X}$  defined in equation \eqref{Xtildedefinition} satisfies
\begin{align}
  \gennorm{\tilde{X}-X_0 } \leq \frac{\sqrt{r(n_1-r)} \gennorm{\tilde{Z}} }{\sqrt{\delta_1}(\sqrt{r}-\sqrt{r_0}-\sqrt{\log(1/\delta_2)})}.
  \label{eq:bound1}
\end{align}
\end{lemma}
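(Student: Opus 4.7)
The plan is to turn $\tilde X-X_0$ into the image of $X_0$ under a random oblique projection, and then bound its norm by splitting the randomness into the independent contributions of $S$ and $(\tilde S,\tilde Z)$. Using the identity \eqref{eq:AUA}, I would first write
\[
\tilde X-X_0=-PX_0,\qquad P:=I-U_A(SU_A)^{-1}S,
\]
and check directly that $P^2=P$, $PU_A=0$, and $SP=0$, so that $P$ is the oblique projection onto $\ker S$ along $\textnormal{Im}(U_A)$. Invoking the oblique-projection formula of Lemma~\ref{lem:projection}, I would represent $P=K(U_{A,\perp}^*K)^{-1}U_{A,\perp}^*$, where $K\in\mathbb C^{n_1\times(n_1-r)}$ has orthonormal columns spanning $\ker S$. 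The norm hypotheses on $\gennorm{\cdot}$ then give
\[
\gennorm{\tilde X-X_0}\leq \frac{1}{\sigma_{\min}(U_{A,\perp}^*K)}\,\gennorm{U_{A,\perp}^*X_0}.
\]

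The next step is to bound $\gennorm{U_{A,\perp}^*X_0}$ solely in terms of $\tilde Z$. From $U_{A,\perp}^*A=0$ and $A=X_0\tilde S^*+\tilde Z^*$ I read off $U_{A,\perp}^*X_0\tilde S^*=-U_{A,\perp}^*\tilde Z^*$. Substituting the thin SVD $X_0=U_0\Sigma_0V_0^*$, and noting that since $\tilde S$ is complex Gaussian with $r>r_0$ the matrix $V_0^*\tilde S^*\in\mathbb C^{r_0\times r}$ has full row rank almost surely, I can solve for $U_{A,\perp}^*X_0$ via a right pseudo-inverse:
\[
U_{A,\perp}^*X_0=-U_{A,\perp}^*\tilde Z^*(V_0^*\tilde S^*)^{\dagger}V_0^*.
\]
Applying the norm inequality on both sides and using $\gennorm{\tilde Z^*}=\gennorm{\tilde Z}$ yields $\gennorm{U_{A,\perp}^*X_0}\leq \gennorm{\tilde Z}/\sigma_{\min}(\tilde SV_0)$. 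Because $V_0$ has orthonormal columns, $\tilde SV_0$ is an $r\times r_0$ i.i.d.\ complex Gaussian matrix, and a standard smallest-singular-value tail bound gives
\[
\sigma_{\min}(\tilde SV_0)\geq \sqrt r-\sqrt{r_0}-\sqrt{\log(1/\delta_2)}
\]
with probability at least $1-\delta_2$; the lower assumption on $\delta_2$ is precisely what makes this estimate non-negative.

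It remains to bound $\sigma_{\min}(U_{A,\perp}^*K)$ from below. Here the crucial fact is that $S$ is independent of $(\tilde S,\tilde Z)$, hence of $U_A$ and $U_{A,\perp}$. Since $S$ is complex Gaussian, its kernel is Haar-distributed on the Grassmannian $\mathrm{Gr}(n_1-r,n_1)$, so conditionally on $U_{A,\perp}$, rotational invariance identifies $U_{A,\perp}^*K$ in distribution with a square $(n_1-r)\times(n_1-r)$ corner of a Haar unitary on $U(n_1)$. Applying Lemma~\ref{lem:unitary_truncation} yields
\[
\sigma_{\min}(U_{A,\perp}^*K)\geq \frac{\sqrt{\delta_1}}{\sqrt{r(n_1-r)}}
\]
with probability at least $1-\delta_1$. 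Chaining the three estimates via a union bound produces \eqref{eq:bound1}.

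The step I expect to be the main obstacle is the probabilistic identification of $U_{A,\perp}^*K$ as a truncated Haar unitary: one must use both the independence of $S$ from $(\tilde S,\tilde Z)$ (so that $U_A$, and hence $U_{A,\perp}$, can be conditioned on as deterministic) and the assumption that $\tilde Y$ has rank $r$ almost surely (so that $U_A$ is well defined as the orthonormal column basis of $A$). This is also the step where the condition-number-free nature of the bound is earned: all dependence on $X_0$ is absorbed into $\sigma_{\min}(\tilde SV_0)$, a purely Gaussian quantity whose tail does not see $X_0$'s condition number.
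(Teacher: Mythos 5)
Your proposal is correct and follows essentially the same route as the paper's proof: the same oblique-projection representation of $\tilde X-X_0$ via Lemma~\ref{lem:projection}, the same algebraic elimination of $X_0$ using $U_{A,\perp}^*A=0$ and the right pseudo-inverse of $V_0^*\tilde S^*$, and the same two probabilistic ingredients (Lemma~\ref{lem:unitary_truncation} for the truncated Haar corner and Lemma~\ref{lem:Gordon} for $\sigma_{\min}(\tilde SV_0)$), combined by a union bound. The step you flag as the main obstacle---identifying $U_{A,\perp}^*K$ with a corner of a Haar unitary by conditioning on $(\tilde S,\tilde Z)$---is handled in the paper exactly as you describe.
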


\begin{proof}
From \eqref{Xtildedefinition} and \eqref{eq:AUA}, %and Theorem~\ref{thm:exact},
\begin{align}
    \tilde{X}-X_0&= \tilde{Y}^*(S\tilde{Y}^*)^{\dagger}SX_0-X_0  \notag \\
    &=U_A(SU_A)^{-1}SX_0-X_0 \label{eq:tildeX-X_0}\\
    &=-(I-U_A(SU_A)^{-1}S)X_0 \notag \\
    &=-PX_0, \notag
\end{align}
where we have set $P:=I-U_A(SU_A)^{-1}S$. We observe that $P$ is a projection, i.e., $P^2 =P$, which satisfies 
\begin{align*}
    \text{ker } P &=U_A,\\
    \text{Im } P&= \text{ker } S =: \tilde{V}\in \mathbb C^{n_1\times (n_1-r)} .
\end{align*}
Recall that $X_0=U_0\Sigma_0V_0^*$ is the SVD of $X_0$.
From  Lemma \ref{lem:projection},
\begin{align*}
    PX_0&=\tilde{V}(U_{A,\perp}^*\tilde{V})^{-1} U_{A,\perp}^*X_0 \\
    &=\tilde{V}(U_{A,\perp}^*\tilde V)^{-1} U_{A,\perp}^*U_0\Sigma_0V_0^*.
\end{align*}
Then we can bound
\begin{align}
   \gennorm{ \tilde X-  X_0   } &=  \gennorm{ \tilde{V}(U_{A,\perp}^*\tilde V)^{-1} U_{A,\perp}^*U_0\Sigma_0V_0^*} \notag\\
    &\leq  \specnorm{\tilde{V}} \specnorm{(U_{A,\perp}^*\tilde V)^{-1}}  \gennorm{U_{A,\perp}^*U_0\Sigma_0V_0^*}\\
    &=  \specnorm{(U_{A,\perp}^*\tilde V)^{-1}}  \gennorm{U_{A,\perp}^*U_0\Sigma_0V_0^*}.\label{eq:errorbound}
\end{align}
We focus our efforts on simplifying the second term of \eqref{eq:errorbound}. Writing $A$ in terms of the SVD of $X_0$, one obtains
\begin{equation*}
    A=\tilde{Z}^* + X_0 \tilde{S}^* = U_A \Sigma_A V_A^* = \tilde{Z}^* + U_0 \Sigma_0 V_0^* \tilde{S}^*.
\end{equation*}
Rearranging terms yields
\begin{equation*}
     U_0 \Sigma_0 V_0^* \tilde{S}^* = A- \tilde{Z}^*.
\end{equation*}
Since $\tilde{S}V_0$ is of size $r\times r_0$, $\tilde{S}$ is complex Gaussian and $V_0$ has orthonormal columns, then with probability $1$, $\tilde{S}V_0$ has linearly independent columns. Thus $V_0^*\tilde{S}^*$ has linearly independent rows and from Property (1) in Lemma \ref{lem:pinverse},
\[ (V_0^*\tilde{S}^*)(V_0^*\tilde{S}^*)^{\dagger}=I.
\]
This implies that
\begin{equation*}
    U_0 \Sigma_0 = \left( A- \tilde{Z}^* \right) \left(  V_0^* \tilde{S}^* \right)^\dagger.
\end{equation*}
We note that
\begin{align*}
\gennorm{U_{A,\perp}^*U_0\Sigma_0 V_0^*} 
&\le \gennorm{ U_{A,\perp}^*U_0\Sigma_0 } \specnorm{V_0^*}  \\
&= \gennorm{ U_{A,\perp}^*U_0\Sigma_0 }  \\
&= \gennorm{ U_{A,\perp}^*  \left( A- \tilde{Z}^* \right) \left(  V_0^* \tilde{S}^* \right)^\dagger } \\
&= \gennorm{ U_{A,\perp}^*   \tilde{Z}^* \left(  V_0^* \tilde{S}^* \right)^\dagger } \\
&\le  \gennorm{ U_{A,\perp}^*   \tilde{Z}^* }  \specnorm{  \left(  V_0^* \tilde{S}^* \right)^\dagger }\\
&= \frac{ \gennorm{ U_{A,\perp}^*   \tilde{Z}^* } }{\sigma_{\min} \left( V_0^* \tilde{S}^* \right) }\le \frac{ \gennorm{ \tilde{Z} } }{\sigma_{\min} \left( V_0^* \tilde{S}^* \right) }.
\end{align*}

Using the fact that $ \specnorm{(U_{A,\perp}^*\tilde V)^{-1}}=1/{\sigma_{\min}(U_{A,\perp}^*\tilde V)}$ for the first term of \eqref{eq:errorbound}, we then obtain the following bound:  
\begin{align}\label{eq:deterministic}
 \gennorm{ \tilde X-  X_0  } \leq \frac{ \gennorm{ \tilde{Z} }}{\sigma_{\min}(U_{A,\perp}^*\tilde V) \sigma_{\min} \left( \tilde{S}V_0 \right)},
\end{align}
which holds with probability 1.

We now derive a probabilistic bound from \eqref{eq:deterministic} using concentration inequalities from random matrix theory.
Since $\tilde{V}$ can be seen as the $n_1\times (n_1-r)$ submatrix of a Haar unitary matrix $(\tilde{V}, \tilde{V}_{\perp})$ and $S$ is independent of $(\tilde{Z},\tilde S)$, by unitary invariance property, 
\begin{align}
   M:= \begin{bmatrix}
     U_{A,\perp}^*\\U_A^* 
   \end{bmatrix}(\tilde{V}, \tilde{V}_{\perp}) \notag
\end{align}
is also a Haar unitary matrix, and $U_{A,\perp}^*\tilde{V}$ is exactly the upper left $(n_1-r)\times (n_1-r)$ corner of $M$. We can apply Lemma \ref{lem:unitary_truncation} to get 
\begin{align}
   \mathbb P\left(   \specnorm{ (U_{A,\perp}^*\tilde V)^{-1}}  \leq \frac{\sqrt{r(n_1-r)}}{\sqrt\delta_1}\right)\geq 1-\delta_1 \notag
\end{align}
for any $\delta_1>0$. Since $\tilde{S}V_0$ is distributed as an $r\times r_0$ complex Gaussian random matrix, if $r>r_0$,
by Lemma \ref{lem:Gordon}, for any $\delta_2$ with $ 1 \ge \delta_2 > \exp \left( - \left( \sqrt{r} - \sqrt{r_0} \right)^2 \right) $, with probability at least $1-\delta_2$, 
\begin{align}\label{eq:minSV0}
    \sigma_{\min}(\tilde{S}V_0)\geq \sqrt{r}-\sqrt{r_0}-\sqrt{\log(1/\delta_2)}.
\end{align}
Combining the two probability estimates, with probability at least $1-\delta_1-\delta_2$,
\begin{align}
    \gennorm{ \tilde X-  X_0 } \leq \frac{\sqrt{r(n_1-r)} \gennorm{\tilde{Z} }  }{\sqrt{\delta_1}(\sqrt{r}-\sqrt{r_0}-\sqrt{\log(1/\delta_2)})}.\notag
\end{align}
\end{proof}

For the second term, by the independence between $S$ and $\tilde{Z}$, $SU_A$ is distributed as an $r \times r $ standard complex Gaussian matrix. 
Thus, it follows from Lemma \ref{lem:square} that with probability at least $1-\epsilon$ for any $0 < \epsilon <1$,
\[ \sigma_{\min}(SU_A)\geq \sqrt{\log (1/(1-\epsilon))} r^{-1/2}.
\]
Thus, with probability at least $1-\epsilon$,
\begin{align}
  \gennorm{U_A (SU_A)^{-1} Z } & \le \specnorm{U_A } \specnorm{(SU_A)^{-1} } \gennorm{ Z  } \notag\\
  &= \frac{1}{\sigma_{\min}(SU_A)} \gennorm{ Z  } \notag\\
  &\leq \frac{\sqrt{r}}{\sqrt{\log(1/(1-\epsilon))}}\gennorm{ Z  }. 
    \label{eq:bound2}
\end{align}
Combining \eqref{eq:bound1} and \eqref{eq:bound2} yields the desired result. This finishes the proof of Theorem \ref{thm:main}.

\subsection{Proof of Corollaries}
\label{sec:proof_cor}
\begin{proof}[Proof of Corollary \ref{cor:lowrankapprox}]
 Write $X_0=X_1+E$, where $X_1$ is the best rank-$r_1$ approximation to $X_0$. 
 We obtain 
 \begin{align*}
  Y &= SX_0+Z= SX_1 + (SE+ Z), \\
  \tilde{Y}&= \tilde{S}X_0^*+\tilde{Z}= \tilde{S}X_1^* + (\tilde{S}  E^* +\tilde{Z}). 
\end{align*} 

Applying  Theorem \ref{thm:main} to $X_1$ and two error terms $(SE+Z)$ and $(\tilde{S}  E^*+\tilde{Z})$, the following error bound  holds with probability at least  $1-\delta_1-\delta_2-\epsilon$,
   \begin{align*}
       &\specnorm{X-X_0}\leq \specnorm{X - X_1} + \specnorm{ E} \\ 
       &\leq  \frac{\sqrt{r(n_1-r)} \specnorm{\tilde{S}  E^* +\tilde{Z}}}{\sqrt{\delta_1}(\sqrt{r}-\sqrt{r_1}-\sqrt{\log(1/\delta_2)})} + \frac{\sqrt{r} \specnorm{SE+Z}}{\sqrt{\log(1/(1-\epsilon))}} + \specnorm{E}\\
       &\leq  \left(\frac{\sqrt{r(n_1-r)} \specnorm{\tilde{S}}}{\sqrt{\delta_1}(\sqrt{r}-\sqrt{r_1}-\sqrt{\log(1/\delta_2)})} + \frac{\sqrt{r} \specnorm{S}}{\sqrt{\log(1/(1-\epsilon))}} +1 \right)\specnorm{E}\\
       &+\frac{\sqrt{r(n_1-r)} \specnorm{\tilde{Z}}}{\sqrt{\delta_1}(\sqrt{r}-\sqrt{r_1}-\sqrt{\log(1/\delta_2)})} + \frac{\sqrt{r} \specnorm{Z}}{\sqrt{\log(1/(1-\epsilon))}}\\
         &=  \left(\frac{\sqrt{r(n_1-r)} \specnorm{\tilde{S}}}{\sqrt{\delta_1}(\sqrt{r}-\sqrt{r_1}-\sqrt{\log(1/\delta_2)})} + \frac{\sqrt{r} \specnorm{S}}{\sqrt{\log(1/(1-\epsilon))}} +1 \right) \sigma_{r_1+1} \left(X_0 \right)\\
         &+\frac{\sqrt{r(n_1-r)} \specnorm{\tilde{Z}}}{\sqrt{\delta_1}(\sqrt{r}-\sqrt{r_1}-\sqrt{\log(1/\delta_2)})} + \frac{\sqrt{r} \specnorm{Z}}{\sqrt{\log(1/(1-\epsilon))}}.
   \end{align*}
From  concentration of the operator norm of $\tilde{S}$ and $S$ in Lemma \ref{lem:Gordon}, with probability $1-2\delta_2$, we have that
\[ \specnorm{S}\leq \sqrt{r}+\sqrt{n_1}+\sqrt{\log(1/\delta_2)}, \quad   \specnorm {\tilde S}\leq \sqrt{r}+\sqrt{n_2}+\sqrt{\log(1/\delta_2)}.
\]
We then obtain the desired bound.
  \end{proof}

\begin{proof}[Proof of Corollary \ref{cor:tproduct_sketch}]
Consider the double sketch tensor approach described in \eqref{eq:tensordoublesketch}: 
\begin{align*}
\mathcal{Y} &= \mathcal{S} * \mathcal{X}_0 + \mathcal{Z}, \,\,\, \mathcal{\tilde{Y}} = \mathcal{\tilde{S}} * \mathcal{X}_0^* + \mathcal{\tilde{Z}}
\end{align*}
where $\mathcal{S}_1 = S$, $\mathcal{\tilde{S}}_1 = \tilde{S}$ and $\mathcal{S}_k =\textbf{0}$, $ \mathcal{\tilde{S}}_k = \textbf{0}$ for all $k \in \{2, \dots, n_3\}$. With this construction, after performing a mode-3 {FFT} \eqref{eq:mode3fft} on the tensors, the measurements $\widehat{\mathcal{Y}}$ and  $\widehat{\tilde{\mathcal{Y}}}$ can be decomposed into $n_3$ low-rank matrix sketches:
\begin{align}
\widehat{\mathcal{Y}}_i &= \widehat{\mathcal{S}}_i \widehat{\mathcal{X}_0}_i + \widehat{\mathcal{Z}}_i, \,\,\, \widehat{\mathcal{\tilde{Y}}}_i = \widehat{\mathcal{\tilde{S}}}_i  \widehat{\mathcal{X}_0^*}_i + \widehat{\mathcal{\tilde{Z}}}_i, \,\,\,\, i \in [n_3].
\label{eq:tensordecomp}
\end{align}
Notice that it follows from the definition of mode-3 Fourier transformation in \eqref{eq:mode3fft}, $\widehat{\mathcal S_i}$ and $\widehat{\tilde{\mathcal S_i}}$ are complex Gaussian random matrices with independent entries by the unitary invariance of the complex Gaussian distribution. 
Thus, the results of Theorem~\ref{thm:exact} and Theorem~\ref{thm:main} can be applied to produce recovery guarantees for the double sketch tensor approach. In particular, when $\mathcal{Z} =\mathbf{0} $, $\tilde{\mathcal{Z}} =\mathbf{0}$, by Theorem~\ref{thm:exact}, each of the $n_3$ transformed frontal slices $\mathcal{X}_i$  can be recovered with probability 1 from the sketches given in \eqref{eq:tensordecomp}. 

For general noise and when $r_0 < r < n_1$, we start by noting that
\begin{align*}
    \|\mathcal{X} - \mathcal{X}_0 \|_F^2 &= \sum_{k=1}^{n_3} \|\mathcal{X}_{k} - (\mathcal{X}_{0})_k \|_F^2 \\ 
    & \leq \sum_{k=1}^{n_3} \left( 2\|\widehat{\mathcal{X}}_{k} - \widetilde{\widehat{\mathcal{X}}_{k}} \|_F^2 + 2\|\widetilde{\widehat{\mathcal{X}}_{k}} - (\widehat{\mathcal{X}}_{0})_k \|_F^2\right),
\end{align*}
where $\widehat{\mathcal{X}}_{k}, (\widehat{\mathcal{X}_{0}})_k \in \mathbb{C}^{n_1 \times n_2}$ are $k^{th}$ frontal slices of the mode-3 FFT of $\mathcal{X}$ and $\mathcal{X}_0$, and $\widetilde{\widehat{\mathcal{X}}_{k}}$ is the recovered frontal slice  {when $\mathcal{Z} = \mathbf{0}$ and $\mathcal{\tilde{Z}} = \mathbf{0}$}. For the first term, we can use the same argument as in the proof of Theorem~\ref{thm:main} and invoke Lemma~\ref{lem:Z_1=0}, and for the second term, we apply \eqref{eq:bound2}.
Thus, the approximation error is:
\begin{align}
    \|\mathcal{X} - \mathcal{X}_0 \|_F^2
    & \leq \sum_{k=1}^{n_3} \left(2\|\widehat{\mathcal{X}}_{k} - \widetilde{\widehat{\mathcal{X}}_{k}} \|_F^2 + 2\|\widetilde{\widehat{\mathcal{X}}_{k}} - (\widehat{\mathcal{X}}_{0})_k \|_F^2\right) \nonumber \\
    & \leq  \sum_{k=1}^{n_3} \left( \frac{2{r(n_1-r)}\|\widehat{\tilde{\mathcal{Z}_k}}\|_F^2}{{\delta_1}(  \sqrt{r}-\sqrt{r_0}  -\sqrt{\log(1/\delta_2)})^2} + \frac{{2r}\|\widehat{\mathcal{Z}_k}\|_F^2}{{\log(1/(1-\epsilon))}} \right) \nonumber \\ 
     & = \frac{2{r(n_1-r)}\|{\tilde{\mathcal{Z}}}\|_F^2}{{\delta_1}( \sqrt{r}-\sqrt{r_0}  -\sqrt{\log(1/\delta_2)})^2}  + \frac{2{r}\|{\mathcal{Z}}\|_F^2}{{\log(1/(1-\epsilon))}}. \label{eq:tensorrobust}
\end{align}

We now prove the low-tubal-rank approximation case, which follows the proof of Corollary~\ref{cor:lowrankapprox}. Let $r_1$ be an integer and define the error tensor $\mathcal{E}$ such that $\mathcal{X}_0 = \mathcal{X}_1 + \mathcal{E}$ where $\mathcal{X}_1$ is the best rank $r_1$ approximation of $\mathcal{X}_0$. Thus, the sketches can be written as 
\begin{align*}
    \mathcal{Y} &= \mathcal{S} * \mathcal{X}_0 + \mathcal{Z} = \mathcal{S} *\mathcal{X}_1 + (\mathcal{S} * \mathcal{E} + \mathcal{Z}) \\ 
    \mathcal{\widetilde{Y}} &= \mathcal{\widetilde{S}} * \mathcal{X}_0^* + \mathcal{\widetilde{Z}} = \mathcal{\widetilde{S}} * \mathcal{X}_1^* + (\mathcal{\widetilde{S}} * \mathcal{E}^* + \mathcal{\widetilde{Z}}), 
\end{align*}
and thus we can interpret $\mathcal{Y}$ and $\mathcal{\tilde{Y}}$ as sketches of $\mathcal{X}_1$.
If $r_1 < r < n_1$ and for all $k \in [n_3]$, $\hat{\tilde{\mathcal{Y}}}_k$ is of rank $r$, we have that 
\begin{align}
    \| \mathcal{X} - \mathcal{X}_0 \|^2_F &\leq 2\|\mathcal{X} - \mathcal{X}_1 \|^2_F + 2 \| \mathcal{E}\|^2_F  \nonumber\\ 
    &\leq \frac{4{r(n_1-r)}\|\mathcal{\widetilde{S}} * \mathcal{E}^* + \mathcal{\widetilde{Z}}\|_F^2}{{\delta_1}( \sqrt{r}-\sqrt{r_1}  -\sqrt{\log(1/\delta_2)})^2}  + \frac{4{r}\|\mathcal{S} * \mathcal{E} + \mathcal{Z}\|_F^2}{{\log(1/(1-\epsilon))}} + 2 \| \mathcal{E}\|^2_F, \label{eq:lowranktensor}
\end{align}
with probability at least $1 - (\delta_1 - \delta_2 - \epsilon)n_3$, where the second inequality uses \eqref{eq:tensorrobust}. We can further decompose the term: 
\begin{align*}
    \|\mathcal{S} * \mathcal{E} + \mathcal{Z}\|_F^2 & \leq 2\|\mathcal{S}* \mathcal{E}\|_F^2 + 2\|\mathcal{Z}\|_F^2 \\
    & = 2 \sum_{k=1}^{n_3} \| \widehat{\mathcal{S}}_k \widehat{\mathcal{E}}_k\|^2_F + 2\|\mathcal{Z}\|_F^2 \\ 
    & \leq  2 \sum_{k=1}^{n_3} \| \widehat{\mathcal{S}}_k \|^2_{2\rightarrow 2} \| \widehat{\mathcal{E}}_k\|^2_F + 2\|\mathcal{Z}\|_F^2  \\
    & =  2 \| S \|^2_{2\rightarrow 2} \| {\mathcal{E}}\|^2_F + 2\|\mathcal{Z}\|_F^2~,
    \end{align*}
    where we use the fact that $\widehat{\mathcal{S}}_k = S$ for all $k$ in the last equality.
    By Lemma~\ref{lem:Gordon}, we have that with probability at least $1-\delta_2$,
    \begin{align*}
   \|\mathcal{S} * \mathcal{E} + \mathcal{Z}\|_F^2  & \leq 2 \left( \sqrt{r} + \sqrt{n_1} + \sqrt{\log(1/\delta_2)} \right){^2} \| {\mathcal{E}}\|^2_F + 2\|\mathcal{Z}\|_F^2.
\end{align*}
Similarly, for $\|\mathcal{\widetilde{S}} {*}\mathcal{E}^* + \mathcal{\widetilde{Z}}\|_F^2$, we have that: 
\begin{align*}
    \|\mathcal{\widetilde{S}}{*} \mathcal{E}^* + \mathcal{\widetilde{Z}}\|_F^2 & \leq 2\left(\sqrt{r} + \sqrt{n_2} + \sqrt{\log(1/\delta_2)}\right){^2} \| \mathcal{E} \|^2_F + 2\| \widetilde{\mathcal{Z}}\|^2_F,
\end{align*}
with probability $1-\delta_2$.
Finally, using the upper bounds on $\|\mathcal{\widetilde{S}} {*}\mathcal{E}^* + \mathcal{\widetilde{Z}}\|_F^2$ and $\|\mathcal{S} {*}\mathcal{E} + \mathcal{Z}\|_F^2$ in \eqref{eq:lowranktensor} obtains the desired result.
\end{proof}

\section{Experiments}
\label{sec:experiments}

We complement our theoretical findings with numerical experiments examining the empirical behavior of the approximation error using the double sketch algorithm for recovering low-rank matrices from {noisy} sketches\footnote{Code is available at: \url{https://github.com/anna-math/doublesketch}}. {Unless otherwise stated,} we fix the dimension of the rank $r_0$ matrix  $X_0 \in \mathbb{R}^{n_1 \times n_2} $  to be  $n =n_1 = n_2 =100$, 
initialized by multiplying two i.i.d. {Gaussian matrices} of size $100 \times r_0$ and $r_0 \times 100$. (The singular value distribution of the product of two rectangular random Gaussian matrices can be found in \cite{burda2010eigenvalues}. Notably, this is not the Marchenko-Pastur law \cite{bai2010spectral}.{)}
Empirical approximation error represents the median error over 50 trials where the sketching matrices change with each trial. The entries of the error matrices (or tensors) $Z$ and $\tilde{Z}$ ($\mathcal{Z}$ and $\tilde{\mathcal{Z}}$) are drawn i.i.d. from a standard Gaussian distribution and then normalized to achieve the desired Frobenius norm. For these numerical experiments, all norms and errors are measured in the Frobenius norm.

Figure~\ref{fig:fig1} demonstrates the performance of the double sketch algorithm for recovering a low-rank matrix when $r_0 = 10$ and $r \in \{ r_0+1, 2r_0, n-1 \}$ (left to right). For each tuple $(r,r_0)$, we plot the approximation error while varying the noise levels $\epsilon_1 = \|Z \|_F$ and $\epsilon_2 = \| \tilde{Z}\|_F$. Note that in Figure~\ref{fig:fig1} as $r$ increases and approaches $n_1$, the impact of $\epsilon_2$ in the approximation error becomes weaker. This can be seen best in the far right plot where the choice of $\epsilon_2$ seems to have no influence on the approximation error of the double sketch algorithm. This is expected as the error bound represented in Theorem~\ref{thm:main} depends on $n-r$ and if we pick $r = n$, the error bound becomes independent of $\epsilon_2$.

\begin{figure}[ht]
    \centering
    \includegraphics[width=\textwidth]{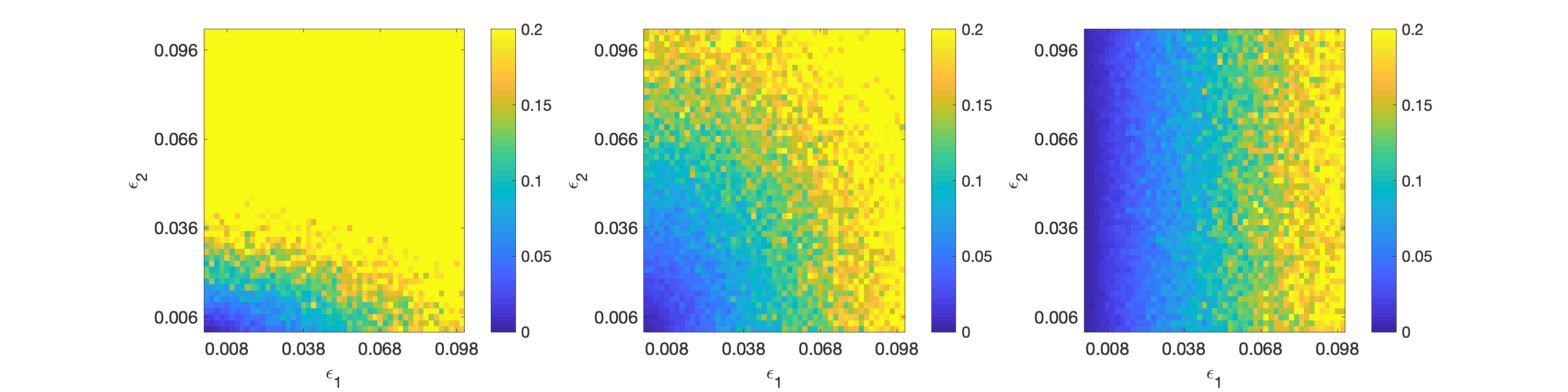}
    \caption{Numerical results demonstrating the empirical performance measured by the approximation error $\|X - X_0 \|_F$ of the double sketching algorithm for recovering a $100 \times 100$ rank $r_0 = 10$ matrix while varying $\| {{Z}}\|_F = \epsilon_1$ and $\| {\tilde{Z}}\|_F = \epsilon_2$ when (left) $r = r_0+1$ (center) $r = 2r_0$ (right) $r = n-1$. The approximation error reported is the median after 50 trials.}
    \label{fig:fig1}
\end{figure}

In our next experiment, we consider the recovery of low-tubal-rank tensors. Figure~\ref{fig:fig2} presents the empirical approximation errors obtained when using the double sketch Algorithm~\ref{eq:tensordoublesketch} to recover low-tubal-rank tensors. To initialize the underlying low-rank tensor, we multiply two tensors of dimension $n \times r_0 \times n_3$ and $r_0 \times n \times n_3$ such that $r_0 < \min\{n,n_3\}$ and we set $n=100$ in this experiment. Thus, the tubal rank of the tensor is $r_0$ and $\mathcal{X}_0 \in \mathbb{R}^{n \times n \times n_3}$. Similar to the experiment presented for the matrix case, the approximation error is the median error from 50 trials where new sketching matrices are drawn for each trial. In the tensor case, as we vary $\epsilon_1$ and $\epsilon_2$, the behavior of the approximation error is similar to the matrix case. In particular, as $r$ approaches $n$, the error bound becomes independent of $\epsilon_2$. In other words, the trend for the approximation error does not change as $\epsilon_2$ varies, as shown in the far right plot of Figure~\ref{fig:fig2}.

\begin{figure}[ht]
    \centering
    \includegraphics[width=\textwidth]{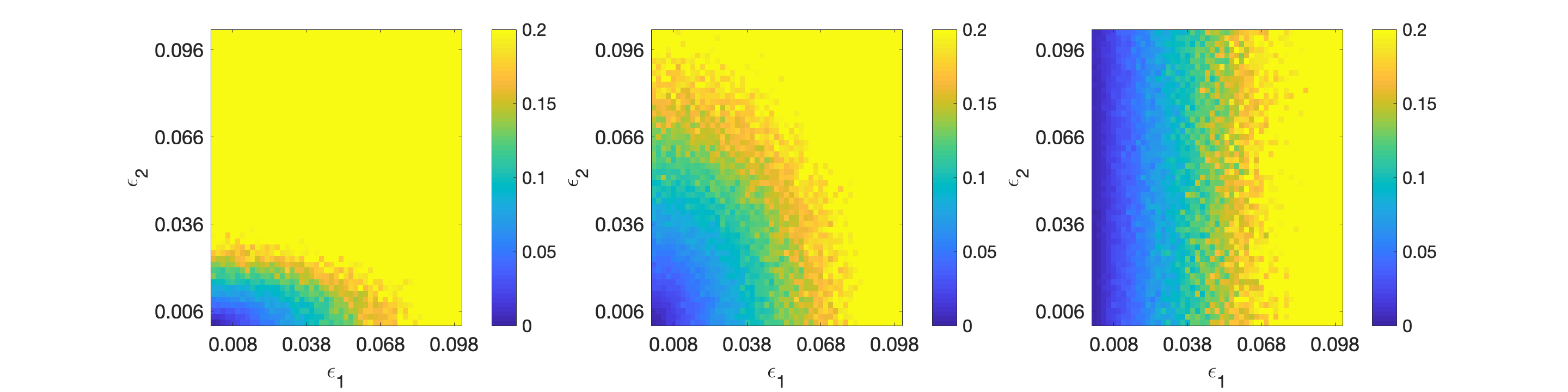}
    \caption{The Frobenius norm error $\|\mathcal{X} - \mathcal{X}_0 \|_{{F}}$ of the double sketching algorithm for recovering a $100 \times 100 \times {50}$ tensor of tubal-rank $r_0 = 10$ {when} varying $\| \mathcal{\tilde{Z}}\|_F = \epsilon_1$ and $\| \mathcal{{Z}}\|_F = \epsilon_2$ for: (left column) $r = r_0+1$ (center column) $r = 2r_0$ (right column) $r = n-1$. }
    \label{fig:fig2}  
\end{figure}

Figure~\ref{fig:fig3} presents the performance of Algorithm~\ref{alg:ds_tensor} as $n_3$ increases for {different} choices of $r$. For this experiment, $\mathcal{X}_0$ is a tubal-rank $r_0 = 10$ tensor of dimension $100 \times 100 \times n_3$ with unit Frobenius norm and {we report the median approximation error after 100 trials}. The noise level is set to $\| \mathcal{\tilde{Z}}\|_F = 0.01$ and $\| \mathcal{{Z}}\|_F = 0.01$. Here we make two observations. First, we note that as $r$ approaches $n=100$, the approximation error gets smaller. This is expected as the first term of the error bound \eqref{eq:robustrecoverytensor} depending on $\| \mathcal{\tilde{Z}}\|_F$ decreases as $r$ approaches $n$. Second, as $n_3$ increases, we see that the approximation error slowly increases with $n_3$, 
which {could be} expected since our probability bound gets worse as $n_3$ increases{, see the second statement in Corollary \ref{cor:tproduct_sketch}}.

\begin{figure}[ht]
    \centering
    \includegraphics[width=.4\textwidth]{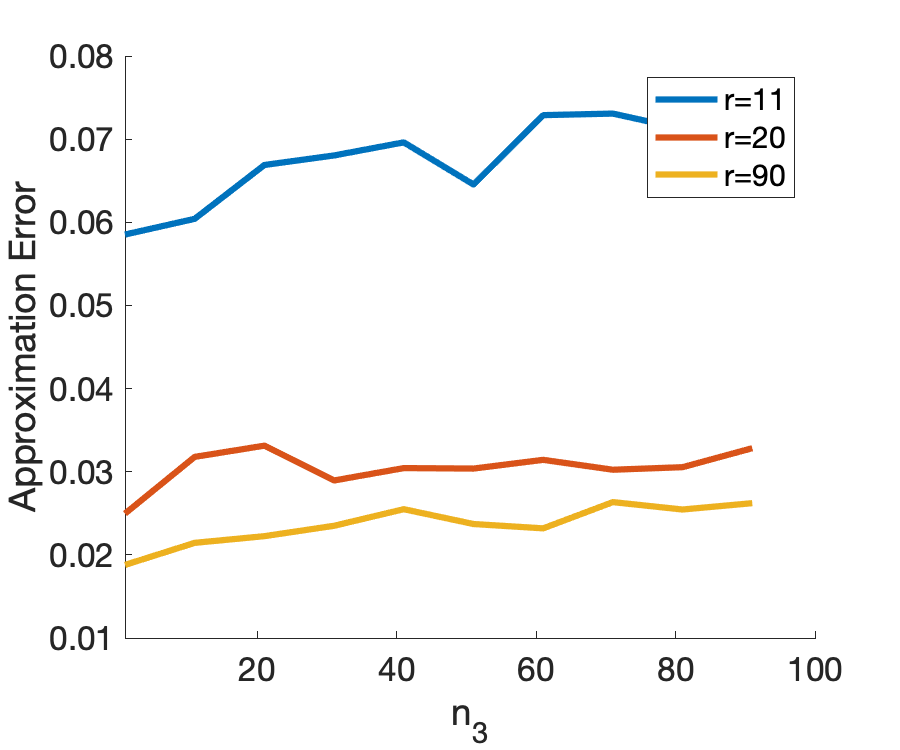}
    \caption{Frobenius norm error {$\|\mathcal{X} - \mathcal{X}_0 \|_F$} of the double sketching algorithm for recovering a $100 \times 100 \times n_3$ {tensor} while varying $n_3$. The noise level is set to $\| \mathcal{\tilde{Z}}\|_F = 0.01$ and $\| \mathcal{{Z}}\|_F = 0.01$.}
    \label{fig:fig3} 
\end{figure}

In our final experiment, we demonstrate the performance of Algorithm~\ref{alg:ds_tensor} on CT scan data. This data set contains CT scan slices of the C1 vertebrae. The image datasets used in this experiment were from the Laboratory of Human Anatomy and Embryology, University of Brussels (ULB), Belgium\footnote{Bone and joint {CT}-Scan Data \url{https://isbweb.org/data/vsj/}}. 
Here the ground-truth tensor is given by $\mathcal{X}_0 \in \mathbb{R}^{512 \times 512 \times 47}$. We pick $r = 200$ and compare the application of Algorithm~\ref{alg:ds_tensor} to the application of Algorithm~\ref{alg:ds_matrix} naively to each slice of the tensor (which we refer to as slice-wise matrix recovery). {The top row of }Figure~\ref{fig:fig4} presents a visualization of the recovery of the 20th slice of the tensor {while the bottom row visualizes the pixel-wise absolute difference between the ground truth and the recovered slice}. The first {column presents} the ground truth slice, the second {column presents} the 20th frame treated as a matrix and using Algorithm~\ref{alg:ds_matrix} for recovery, and the third {column presents} the 20th frame recovered from the double sketched tensor using Algorithm~\ref{alg:ds_tensor}. {In this experiment, we report the median error following 50 random trials of each setting.} The {median} approximation error $\|\mathcal{X} - \mathcal{X}_0  \|_F$ using Algorithm~\ref{alg:ds_tensor} is {$0.1043$} whereas the {median} approximation error applying Algorithm~\ref{alg:ds_matrix} is {$0.2545$}. Algorithm~\ref{alg:ds_tensor} has the added benefit of only requiring a single sketching matrix $S \in \mathbb{R}^{512 \times 200}$ whereas naively applying Algorithm~\ref{alg:ds_matrix} would require $n_3$ sketching matrices to be stored. Of course, it would be natural to ask whether one can simply use the same sketching matrix {for each slice in} Algorithm~\ref{alg:ds_matrix}. Doing so attained a worse {median} approximation error of {$0.3443$. We suspect this is because, with more randomness, the approximation error is better concentrated when using independent random sketch matrices for different slices.}

\begin{figure}[ht]
    \centering
    \includegraphics[width=\textwidth]{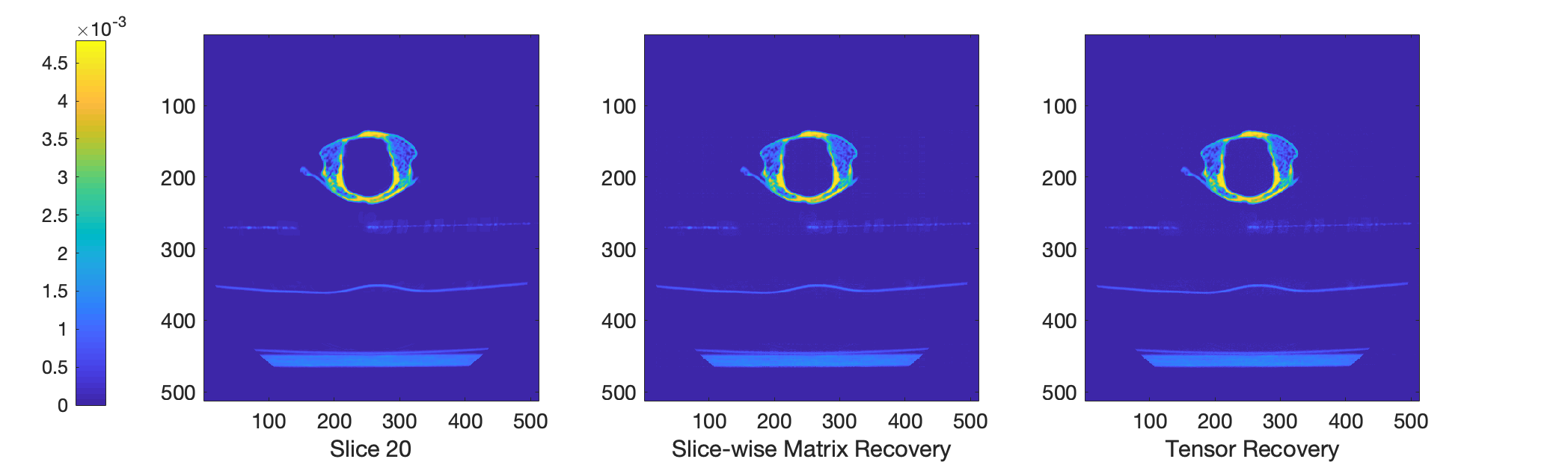}
    \includegraphics[width=\textwidth]{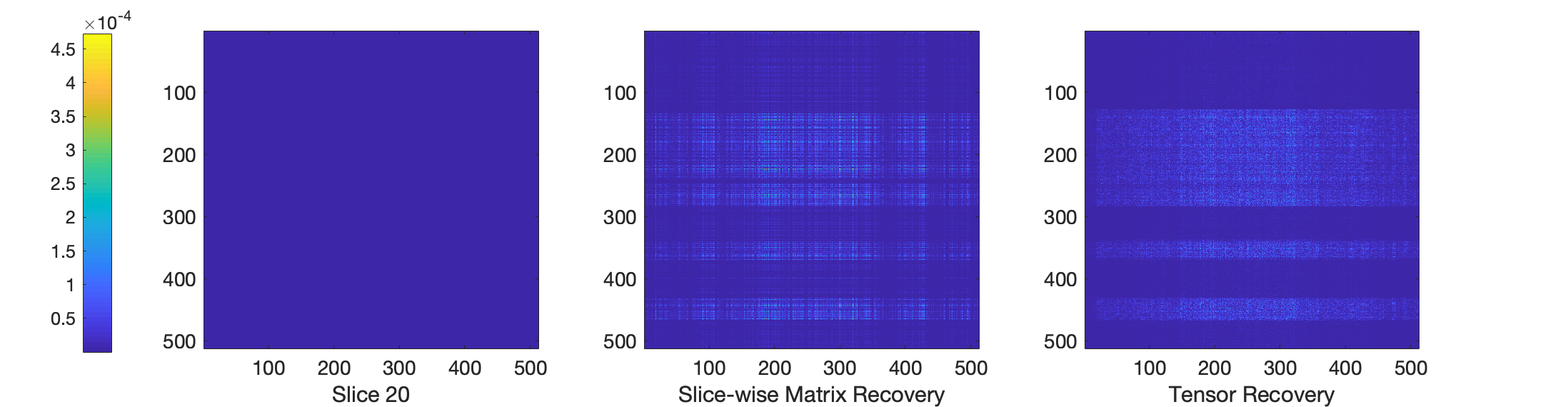}
    \caption{Performance of {Algorithm~\ref{alg:ds_tensor} compared with Algorithm~\ref{alg:ds_matrix} }applied frontal slice-wise to a tensor $\mathcal{X}_0 \in \mathbb{R}^{512 \times 512 \times 47}$ containing CT scan data of the C1 vertebrae. Here, we choose $r= 200$ and $\epsilon_1 = \epsilon_2 = 0.01$. This figure shows the 20th frontal slice for (left) ground truth tensor (center) recovered frontal slice after applying Algorithm~\ref{alg:ds_matrix} to each frontal slice (right) recovered frontal slice after applying Algorithm~\ref{alg:ds_tensor} to recover the ground truth tensor. {(top) Visualization of the recovered slice {.} (bottom) Absolute difference between ground truth and recovered slice.}}
    \label{fig:fig4} 
\end{figure}

\section{Conclusion}

In this paper, we study the problem of recovering low-rank matrices and low-tubal-rank tensors from two sketched matrices (or tensors), which are perturbed by additive noise matrices, which we denote by $Z$, respectively $\tilde{Z}$. This single-pass method{, which is based on an approach by Woolfe et al. \cite{woolfe2008fast},} has been {considered for this problem} in \cite{fazel2008compressed}{, but has not been theoretically analyzed}.
Our main results show that, without making assumptions on the distribution of the noise matrices $Z$ and $\tilde{Z}$, one can reconstruct the low-rank matrix using the method discussed in \cite{fazel2008compressed} and obtain theoretical guarantees when the sketching matrices are assumed to be Gaussian matrices. 
Furthermore, we show that this analysis is also applicable to low-tubal-rank tensor recovery. Numerical experiments corroborate our theoretical findings.

While in this paper, we have used  {the sketching method} considered in \cite{fazel2008compressed}, recent years have seen the development of sketching methods for low-rank approximation methods, which are, in general, more accurate. For example, the method in \cite{tropp_sketching2} used an additional third matrix sketch, which is called \textit{core sketch}, to improve the accuracy.
It would also be interesting to analyze the noise robustness of these more recent methods and to compare the results with the method analyzed in this paper.

Another possible extension is to analyze the truncated version of the noisy sketching method in \cite{fazel2008compressed,woolfe2008fast} for low-rank approximation, which utilizes a truncated SVD on the sketches $Y$ and $\tilde{Y}$ before performing the recovery of $X_0$~\eqref{eq:doublesketch}.

Beyond sketching for matrices and low tubal-rank tensors, it would be interesting to see whether we can analyze the reconstruction of low Tucker-rank tensors from noisy sketches.
A low Tucker-rank approximation of an order-$N$ Tucker-rank tensor using linear sketches was considered in prior work \cite{sun2020low}, where the authors used $N$ random sketching tensors to obtain $N$ factor sketches, and another $N$ random sketching matrices to obtain a core sketch. Based on those sketches, a single-pass low Tucker-rank approximation method was studied. 
In the matrix case ($N=2$), their algorithm corresponds to the one in  \cite{tropp_sketching2}.
Since the Tucker decomposition depends on tensor unfolding operations, and the sketching algorithms are applied to the unfolded tensors, it would be interesting to extend our noisy double sketching analysis to their setting (see {Algorithm 4.3} in \cite{sun2020low}).

It will also be interesting to see whether our analysis can be extended to the scenario where more structured sketching matrices such as subsampled randomized Hadamard transforms \cite{hadamard_transform} or subsampled randomized Fourier transforms (SRFT) are used. 
These more structured sketching matrices have the advantage of reducing the computational burden, and the memory footprint compared to Gaussian sketching matrices (see, e.g., \cite{tropp_sketching} for sketching algorithms for low-rank matrix approximation using SRFT matrices).
Note that while our deterministic formula for the difference between the output of the sketching algorithm and the ground truth matrix also holds in this setting, it is not known to us whether some of the involved concentration inequalities still hold in this scenario. 
We believe that this is an interesting avenue for future research.

\appendix
\section{Technical lemmas}
 
In this section, we collect a few technical lemmas, which were needed in our proofs.
\begin{lemma}[Properties of the pseudo-inverse, see, e.g., \textcolor{black}{\cite{van1996matrix}}]\label{lem:pinverse} 
The following properties hold for the pseudo-inverse.
\begin{enumerate}
\item  If $A$ has linearly independent columns, then  $A^{\dagger}A=I$. If $B$ has linearly independent rows, then $BB^{\dagger}=I$.
\item $(AB)^{\dagger}=B^{\dagger} A^{\dagger}$ if $A$ has orthonormal columns or $B$ has orthonormal rows.
    \item $(AB)^{\dagger}=B^{\dagger} A^{\dagger}$ if $A$ has linearly independent columns.
    \item If $A$ has orthonormal columns or orthonormal rows, then $A^{\dagger}=A^*$.
\end{enumerate}
\end{lemma}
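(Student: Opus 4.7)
The plan is to prove all four properties by relying on the Moore--Penrose characterization: $X=A^{\dagger}$ is the unique matrix satisfying $AXA=A$, $XAX=X$, $(AX)^{*}=AX$, and $(XA)^{*}=XA$. With this tool in hand, properties (1) and (4) collapse to well-known closed forms, while (2) and (3) follow by checking the Moore--Penrose conditions for the candidate $B^{\dagger}A^{\dagger}$.

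For property (1), the hypothesis that $A$ has linearly independent columns makes $A^{*}A$ invertible, so the matrix $X:=(A^{*}A)^{-1}A^{*}$ trivially satisfies all four Moore--Penrose conditions. By uniqueness, $A^{\dagger}=(A^{*}A)^{-1}A^{*}$, which immediately gives $A^{\dagger}A=I$. A symmetric argument shows that when $B$ has linearly independent rows one has $B^{\dagger}=B^{*}(BB^{*})^{-1}$, hence $BB^{\dagger}=I$. Property (4) is then a direct specialization: if the columns of $A$ are orthonormal, then $A^{*}A=I$ and the formula from (1) collapses to $A^{\dagger}=A^{*}$; if the rows are orthonormal, then $AA^{*}=I$ and the analogous row formula gives the same conclusion.

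For property (2), assume first that the columns of $A$ are orthonormal, so by (4) we have $A^{\dagger}=A^{*}$ with $A^{*}A=I$. I will verify the four Moore--Penrose conditions for $X:=B^{\dagger}A^{*}$ with respect to the product $AB$. The identity $A^{*}A=I$ absorbs cleanly:
$(AB)X(AB)=A(BB^{\dagger}B)=AB$ using the known identity $BB^{\dagger}B=B$, and $X(AB)X = B^\dagger(BB^\dagger)A^* = B^\dagger A^* = X$. For the Hermiticity conditions, $(AB)X = A(BB^{\dagger})A^{*}$ is Hermitian because $BB^{\dagger}$ is Hermitian, and $X(AB)=B^{\dagger}(A^{*}A)B=B^{\dagger}B$ is Hermitian by the same property. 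The case when $B$ has orthonormal rows is dual, using $BB^{*}=I$ and $B^{\dagger}=B^{*}$.

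For property (3), the same strategy applies, but now with the closed form $A^{\dagger}=(A^{*}A)^{-1}A^{*}$ in place of $A^{\dagger}=A^{*}$. The extra factor $(A^{*}A)^{-1}$ still cancels wherever $A^{*}A$ appears, so the four Moore--Penrose conditions for $(AB)^{\dagger}=B^{\dagger}A^{\dagger}$ again reduce to $BB^{\dagger}B=B$ together with the Hermiticity of $B^{\dagger}B$ and of $BB^{\dagger}$. The only subtle step in the whole argument is making sure in (2) and (3) that the Hermiticity conditions are not spoiled by the trailing factors; keeping track of which products collapse to the identity is the one place where care is needed, but no genuine obstacle arises beyond bookkeeping.
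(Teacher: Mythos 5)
The paper does not actually prove this lemma---it is quoted from a standard reference---so the only question is whether your argument is sound. Your proofs of (1), (2), and (4) are correct: the closed forms $A^{\dagger}=(A^{*}A)^{-1}A^{*}$ and $B^{\dagger}=B^{*}(BB^{*})^{-1}$ follow from uniqueness of the Moore--Penrose inverse, and your verification of the four Penrose conditions for the candidate $B^{\dagger}A^{*}$ in (2) is complete in both the orthonormal-columns and orthonormal-rows cases.

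Property (3) is where the argument breaks, and the break is not mere bookkeeping. With $A^{\dagger}=(A^{*}A)^{-1}A^{*}$ and $X:=B^{\dagger}A^{\dagger}$, three of the four Penrose conditions do go through because $A^{\dagger}A=I$ absorbs cleanly. But the remaining condition, Hermiticity of
\begin{equation*}
(AB)X \;=\; A\,BB^{\dagger}\,(A^{*}A)^{-1}A^{*},
\end{equation*}
does not follow: the factor $(A^{*}A)^{-1}$ sits between $BB^{\dagger}$ and $A^{*}$, and there is no reason for $BB^{\dagger}$ to commute with $(A^{*}A)^{-1}$. Indeed, statement (3) as written is false without a further hypothesis: take $A=\left(\begin{smallmatrix}1&1\\0&1\end{smallmatrix}\right)$ (invertible, hence full column rank, with $A^{\dagger}=A^{-1}$) and $B=\left(\begin{smallmatrix}1&0\\0&0\end{smallmatrix}\right)$. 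Then $AB=B$, so $(AB)^{\dagger}=B$, while $B^{\dagger}A^{\dagger}=\left(\begin{smallmatrix}1&-1\\0&0\end{smallmatrix}\right)\neq B$. The correct sufficient condition is that $A$ have linearly independent columns \emph{and} $B$ have linearly independent rows; then $BB^{\dagger}=I$ by part (1), the offending product collapses to $A(A^{*}A)^{-1}A^{*}$, which is Hermitian, and your verification closes. This stronger hypothesis is available everywhere the paper invokes part (3) (e.g.\ in the proof of Theorem \ref{thm:exact}, where the right-hand factor $\Sigma_0 V_0^{*}\tilde S^{*}$ has linearly independent rows almost surely), so the repair costs nothing downstream---but your closing remark that ``no genuine obstacle arises beyond bookkeeping'' is exactly where the obstacle lives.
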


\begin{lemma}[Oblique projection matrix, Theorem 2.1 in \cite{hansen2004oblique}, see also Equation (7.10.40) in \cite{meyer_linalg}]\label{lem:projection}  Let $P \in \mathbb{C}^{n \times n}$ be a projection matrix, i.e., $P^2=P$.
We note that $P$ is uniquely characterized by the subspaces $V_1$ and $V_2$ given by
\begin{align*}
    V_1&:=\textnormal{Im } P,\\
    V_2&= \textnormal{{ker} } P.
\end{align*}
and $\text{dim }V_1 + \text{dim } V_2 =n$. By some abuse of notation, we denote by $V_1$ a matrix with orthonormal columns, whose column span is equal to the subspace $V_1$.
Analogously, we denote by $V_2$ a matrix with orthonormal columns, whose column span is equal to the subspace $V_2$.
    It holds that
    \begin{align}
   P=V_1(V_{2,\perp}^*V_1)^{-1} V_{2,\perp}^*.\notag
   \end{align}
\end{lemma}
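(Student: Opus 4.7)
My plan is to verify the identity directly by checking that the matrix on the right-hand side, call it
\[
Q := V_1(V_{2,\perp}^{*}V_1)^{-1} V_{2,\perp}^{*},
\]
is well defined and acts identically to $P$ on the decomposition $\mathbb{C}^n = V_1 \oplus V_2$. Recall that any projection $P$ is uniquely determined by its image and kernel, and since $P^2 = P$ with $\dim V_1 + \dim V_2 = n$, the subspaces $V_1$ and $V_2$ form a (not necessarily orthogonal) direct sum decomposition of $\mathbb{C}^n$, with $P$ acting as the identity on $V_1$ and as zero on $V_2$.

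The first step is to confirm that $V_{2,\perp}^{*}V_1$ is invertible, which is the only nontrivial algebraic point. Writing $r = \dim V_1$, this is an $r \times r$ matrix. Suppose $V_{2,\perp}^{*} V_1 x = 0$ for some $x \in \mathbb{C}^r$. Then $V_1 x$ is orthogonal to every column of $V_{2,\perp}$, which means $V_1 x$ lies in the column span of $V_2$. Combined with $V_1 x \in V_1$ and the direct sum property $V_1 \cap V_2 = \{0\}$, this forces $V_1 x = 0$, and since $V_1$ has orthonormal columns, $x = 0$. Hence $V_{2,\perp}^{*}V_1$ is invertible.

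The second step is the verification on each summand. For $V_1$: the map $P$ satisfies $P V_1 = V_1$ because $P$ restricted to its image is the identity, and directly
\[
Q V_1 = V_1 (V_{2,\perp}^{*}V_1)^{-1} V_{2,\perp}^{*} V_1 = V_1.
\]
For $V_2$: we have $P V_2 = 0$ since $V_2 = \textnormal{Ker } P$, and
\[
Q V_2 = V_1 (V_{2,\perp}^{*}V_1)^{-1} V_{2,\perp}^{*} V_2 = 0
\]
because the columns of $V_{2,\perp}$ are orthogonal to those of $V_2$. Since $P$ and $Q$ agree on a basis of $\mathbb{C}^n$ given by the columns of $[V_1 \; V_2]$, they are equal as linear maps.

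I do not expect a real obstacle here: the content of the lemma is a standard oblique-projection formula, and the only point requiring a brief argument is the invertibility in Step 1, which follows from the direct-sum property of image and kernel of a projection together with orthogonality of $V_{2,\perp}$ to $V_2$.
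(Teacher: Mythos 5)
Your proof is correct. The paper itself offers no proof of this lemma --- it is stated as a citation to Theorem 2.1 of Hansen--Yalamov and to Meyer's textbook --- so there is nothing internal to compare against; your direct verification is a valid, self-contained substitute. The two points that genuinely need an argument are both handled: the invertibility of $V_{2,\perp}^{*}V_1$ via $V_1\cap V_2=\{0\}$ together with $(\mathrm{span}\,V_{2,\perp})^{\perp}=\mathrm{span}\,V_2$, and the agreement of $P$ and $Q$ on the basis formed by the columns of $[V_1\;V_2]$, which is a basis precisely because $\mathbb{C}^n=\mathrm{Im}\,P\oplus\ker P$ for an idempotent $P$ (you invoke this direct-sum property without proof, but it is the standard one-line consequence of $P^2=P$ and is fair to take as known).
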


\begin{lemma}[Smallest singular value of a truncated Haar unitary matrix, Proposition C.3 in \cite{banks2020pseudospectral}]\label{lem:unitary_truncation}
Let $A$ be the upper left $(n-r)\times (n-r)$ corner of a Haar unitary matrix $U$. Then for any $\delta>0$
\begin{align}
    \mathbb P\left( \sigma_{\min}(A)\geq \frac{\sqrt{\delta}}{\sqrt{r(n-r)}} \right)\geq 1-\delta.\notag
\end{align}
\end{lemma}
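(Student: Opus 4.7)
The plan is a three-step argument based on a Gaussian representation of truncated Haar unitary matrices, followed by a sharp tail bound that exploits the Jacobi ensemble structure of their squared singular values.

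\textbf{Step 1 (Gaussian representation via QR).} I would first realize $A$ as a functional of i.i.d.\ complex Gaussian matrices. Let $G = \begin{pmatrix} G_1 \\ G_2 \end{pmatrix} \in \mathbb{C}^{n\times(n-r)}$ be a matrix with i.i.d.\ standard complex Gaussian entries, where $G_1 \in \mathbb{C}^{(n-r)\times(n-r)}$ and $G_2 \in \mathbb{C}^{r\times(n-r)}$. The polar form $Q := G(G^*G)^{-1/2}$ is an $n \times (n-r)$ isometric matrix, and by unitary invariance of the Gaussian distribution, $Q$ has the same joint distribution as the first $n-r$ columns of an $n \times n$ Haar unitary matrix. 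Consequently, $A$ has the same distribution as the top block $G_1(G^*G)^{-1/2}$.

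\textbf{Step 2 (Closed-form identity for $\sigma_{\min}(A)$).} Working with this representation and using the cyclic property of eigenvalues, I get
\begin{equation*}
 \specnorm{A^{-1}}^2 = \lambda_{\max}\bigl(G_1^{-*}(G_1^*G_1+G_2^*G_2)G_1^{-1}\bigr) = \lambda_{\max}\bigl(I + H^*H\bigr) = 1 + \specnorm{H}^2,
\end{equation*}
where $H := G_2 G_1^{-1}$. Hence $\sigma_{\min}(A)^2 = 1/(1+\specnorm{H}^2)$, and the event $\{\sigma_{\min}(A) < \sqrt{\delta/(r(n-r))}\,\}$ is equivalent to $\{\specnorm{H}^2 > r(n-r)/\delta - 1\}$. (The same identity also follows from the unitarity relation $A^*A + C^*C = I_{n-r}$ together with a Woodbury computation on $CC^* = HH^*(I+HH^*)^{-1}$ for the adjacent block $C = G_2(G^*G)^{-1/2}$.)

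\textbf{Step 3 (Sharp tail bound for $\specnorm{H}$).} It remains to prove $\mathbb{P}(\specnorm{H}^2 > s) \le r(n-r)/(1+s)$ for all $s \ge 0$; substituting $s = r(n-r)/\delta - 1$ then yields exactly $\mathbb{P}(\sigma_{\min}(A) < \sqrt{\delta/(r(n-r))}) \le \delta$. The naive route, bounding $\specnorm{H} \le \specnorm{G_2}\cdot\specnorm{G_1^{-1}}$ and applying Gordon's inequality for $\specnorm{G_2}$ together with Lemma~\ref{lem:square} for $\sigma_{\min}(G_1)$, yields a tail only of order $n(n-r)/s$, which is off by a factor of $n/r$ when $r \ll n$; similarly, conditioning on $G_1$ and using $\mathbb{E}[\specnorm{H}_F^2 \mid G_1] = r\,\mathrm{tr}((G_1^*G_1)^{-1})$ fails because $\mathbb{E}\,\mathrm{tr}((G_1^*G_1)^{-1}) = \infty$ for a square Gaussian $G_1$. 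The main obstacle is therefore recovering the sharp constant $r(n-r)$. To do so, one uses that the squared singular values of $A$ form a complex Jacobi ensemble whose joint density is explicit (proportional to $\prod_{i<j}(\lambda_i-\lambda_j)^2\prod_i(1-\lambda_i)^{2r-n}$ when $r \ge n/2$, with a dual description otherwise), and then integrates this density near the origin via a Selberg-type identity to extract exactly the constant $r(n-r)$. This determinantal computation is the technical heart of Proposition C.3 of \cite{banks2020pseudospectral}.
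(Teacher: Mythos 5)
Your Steps 1 and 2 are correct: the polar representation $A \overset{d}{=} G_1(G_1^*G_1+G_2^*G_2)^{-1/2}$ is a valid realization of the truncated Haar column block, the eigenvalue identity $\specnorm{A^{-1}}^2 = 1+\specnorm{H}^2$ with $H=G_2G_1^{-1}$ is right, and your diagnosis of why the naive routes (submultiplicativity via Lemma~\ref{lem:square} and Lemma~\ref{lem:Gordon}, or the conditional second-moment computation with its divergent $\E\,\tr((G_1^*G_1)^{-1})$) cannot produce the constant $r(n-r)$ is accurate. Note, however, that the paper does not prove this lemma at all: it imports it verbatim as Proposition C.3 of the cited reference, so there is no internal proof to compare against.

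The genuine gap is that your Step 3 asserts, rather than proves, the one inequality that carries the entire content of the lemma, namely $\P(\specnorm{H}^2>s)\le r(n-r)/(1+s)$, and then defers its proof to the very proposition being established. As written, the argument is circular at its final step. The clean way to close it is to invoke (or reprove) the exact distribution of the smallest singular value of a complex truncation, $\P\left(\sigma_{\min}(A)\ge t\right)=(1-t^2)^{r(n-r)}$ for $0\le t\le 1$, which is the $\beta=2$ case of the formula in \cite{dumitriu2012smallest} and is what \cite{banks2020pseudospectral} actually compute from the Jacobi-ensemble joint density you describe; Bernoulli's inequality $(1-t^2)^{r(n-r)}\ge 1-r(n-r)t^2$ with $t=\sqrt{\delta/(r(n-r))}$ then gives the stated bound immediately, without any detour through $H$. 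So your reductions are sound and your identification of the Jacobi-ensemble structure is the right idea, but the determinantal/Selberg computation you gesture at is not optional background: it is the proof, and it is missing.
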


\begin{lemma}[Extreme singular value of Gaussian random matrices, Corollary 5.35 in \cite{vershynin_2012}]\label{lem:Gordon}
Let $A$ be an $m\times n$ random  matrix with independent standard complex Gaussian entries and $m>n$, then for any $\delta$ with $0< \delta \le 1$, with probability at least $1-\delta$,
\[\sigma_{\min}(A)\geq \sqrt{m}-\sqrt{n}-\sqrt{\log(1/\delta)}.
\]
Similarly, with probability at least $1-\delta$,
\[\sigma_{\max}(A)\leq  \sqrt{m}+\sqrt{n}+\sqrt{\log(1/\delta)}.
\]
\end{lemma}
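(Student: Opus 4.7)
The plan is to combine Gordon's classical lower bound for the expected smallest (and upper bound for the largest) singular value of a Gaussian matrix with Borell's Gaussian concentration inequality, together with a real-to-complex reduction that accounts for the entry-variance convention adopted in the paper.

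First, I would establish the expectation bounds $\mathbb{E}[\sigma_{\min}(A)] \geq \sqrt{m} - \sqrt{n}$ and $\mathbb{E}[\sigma_{\max}(A)] \leq \sqrt{m} + \sqrt{n}$. In the real-Gaussian setting, these are the classical Gordon inequalities, derived from the Gaussian minimax comparison theorem (Slepian/Gordon). To transfer them to the complex setting here, where $A = X + iY$ with $X, Y$ independent real matrices of i.i.d.\ $N(0, 1/2)$ entries, I would apply the standard doubling trick: the $2m \times 2n$ real block matrix
\[ \tilde A = \begin{pmatrix} X & -Y \\ Y & X \end{pmatrix} \]
has i.i.d.\ $N(0, 1/2)$ entries and its singular values coincide with those of $A$, each with multiplicity two. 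Writing $\tilde A = \tfrac{1}{\sqrt{2}} B$ with $B$ a $2m \times 2n$ real matrix of $N(0,1)$ entries, the real Gordon bound $\mathbb{E}[\sigma_{\min}(B)] \geq \sqrt{2m} - \sqrt{2n}$ rescales to $\mathbb{E}[\sigma_{\min}(A)] = \mathbb{E}[\sigma_{\min}(\tilde A)] \geq \sqrt{m} - \sqrt{n}$, and symmetrically for $\sigma_{\max}$.

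Second, I would promote these expectation bounds to high-probability deviation bounds via Borell's Gaussian concentration inequality. The functionals $A \mapsto \sigma_{\min}(A)$ and $A \mapsto \sigma_{\max}(A)$ are $1$-Lipschitz in the Frobenius norm on $\mathbb{C}^{m \times n}$. Identifying $\mathbb{C}^{m \times n}$ with $\mathbb{R}^{2mn}$ via $(X,Y) \mapsto X + iY$ turns the Frobenius norm into the standard Euclidean norm, and the joint law of $(X,Y)$ is Gaussian with covariance $\tfrac{1}{2} I_{2mn}$. Writing $(X,Y) = \tfrac{1}{\sqrt{2}} G$ for $G$ standard Gaussian converts these functionals into $(1/\sqrt{2})$-Lipschitz maps of $G$, so Borell's inequality yields the one-sided tail
\[ \mathbb{P}\bigl(\sigma_{\min}(A) < \mathbb{E}[\sigma_{\min}(A)] - t\bigr) \leq \exp\bigl(-t^2/(2\cdot 1/2)\bigr) = \exp(-t^2). \]
Setting $t = \sqrt{\log(1/\delta)}$ and combining with the Gordon mean bound delivers $\sigma_{\min}(A) \geq \sqrt{m} - \sqrt{n} - \sqrt{\log(1/\delta)}$ with probability at least $1 - \delta$; the upper bound on $\sigma_{\max}$ follows by the symmetric argument.

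The only step that demands real care is tracking the variance factor $1/2$ inherent to the complex-Gaussian normalization: it is precisely this factor that converts the familiar real-Gaussian tail $\exp(-t^2/2)$ into $\exp(-t^2)$ here, and in turn accounts for the $\sqrt{\log(1/\delta)}$ (rather than $\sqrt{2\log(1/\delta)}$) appearing in the stated deviation. Apart from this bookkeeping, Gordon's inequality and Borell's Lipschitz concentration are classical tools, and the real-block realization of a complex matrix is routine, so no novel ingredient is needed.
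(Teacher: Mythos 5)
Your concentration step is sound and matches the paper's: $\sigma_{\min}$ and $\sigma_{\max}$ are $1$-Lipschitz in the Frobenius norm, the $2mn$ real coordinates have variance $1/2$, and the resulting tail $\exp(-t^2)$ with $t=\sqrt{\log(1/\delta)}$ is exactly how the paper converts the mean bound into the stated probability bound. The gap is in your first step. The realification $\tilde A = \begin{pmatrix} X & -Y \\ Y & X\end{pmatrix}$ does have the same singular values as $A$ (each doubled), but it does \emph{not} have i.i.d.\ entries: each of the $2mn$ independent real Gaussians $X_{jk}, Y_{jk}$ occupies two positions of $\tilde A$, so its $4mn$ entries carry only $2mn$ degrees of freedom and are strongly dependent. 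Gordon's bound $\mathbb{E}\,\sigma_{\min}(B)\ge \sqrt{2m}-\sqrt{2n}$ is proved via the Slepian--Gordon comparison for the process $\langle Bu,v\rangle$ with the covariance structure of an i.i.d.\ ensemble; for the structured process $\langle \tilde A u,v\rangle = \mathrm{Re}\,\langle A u_c, v_c\rangle$ the increment covariances are genuinely different (one gets $\tfrac12\mathrm{Re}(\overline{\langle u_c,w_c\rangle}\,\langle v_c,z_c\rangle)$ rather than $\tfrac12\,\mathrm{Re}\langle u_c,w_c\rangle\cdot\mathrm{Re}\langle v_c,z_c\rangle$), so the real Gordon theorem cannot be invoked off the shelf. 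As written, the expectation bound is therefore not established.

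The fix is to run the comparison directly on the complex process, which is what the paper does: it writes $\sigma_{\min}(A)=\min_u\max_v\langle Au,v\rangle$, checks the increment condition $\mathbb{E}|X_{u,v}-X_{w,z}|^2=\fronorm{uv^*-wz^*}^2\le \|u-w\|_2^2+\|v-z\|_2^2=\mathbb{E}|Y_{u,v}-Y_{w,z}|^2$ against the comparison process $Y_{u,v}=\langle g,u\rangle+\langle h,v\rangle$ with complex Gaussian $g,h$, applies Gordon's inequality to get $\mathbb{E}\,\sigma_{\min}(A)\ge\mathbb{E}\|h\|_2-\mathbb{E}\|g\|_2$, and then uses monotonicity of $n\mapsto\mathbb{E}\|g\|_2-\sqrt{n}$ for chi-distributed norms to conclude $\mathbb{E}\,\sigma_{\min}(A)\ge\sqrt{m}-\sqrt{n}$. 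If you want to keep a realification flavor, you would have to verify the Gordon increment condition for the structured ensemble yourself, which amounts to the same computation.
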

\begin{proof}
This lemma was proved in \cite{vershynin_2012} for real Gaussian matrices. We include a proof for complex Gaussian matrices for completeness. We have
\begin{align}
    \sigma_{\min}(A)=\min_{u\in S^{n-1}} \max_{v\in S^{m-1}} \langle A u, v\rangle, \notag
\end{align}
where $S^{n-1}$ is the unit sphere in $\mathbb C^{n}$. Let $X_{u,v}=\langle Au,v\rangle $ and $Y_{u,v}=\langle g,u\rangle+ \langle h, v\rangle$, where $g\in \mathbb C^n$, $h\in \mathbb C^{m}$ are independent standard complex Gaussian random vectors.
We find for any $x,w\in S^{n-1}$ and any $y,z\in S^{m-1}$,
\begin{align*}
    \mathbb E|X_{u,v}-X_{w,z}|^2=\|uv^*-wz^* \|_F^2\leq \|u-w\|_2^2+ \|v-z\|_2^2=\mathbb E|Y_{u,v}-Y_{w,z}|^2.
\end{align*}
From Gordon's inequality (\cite[Exercise 7.2.14]{vershynin2018high}) and \cite[Exercise 7.3.4]{vershynin2018high},
\[ \mathbb E \sigma_{\min}(A) \geq \mathbb E\|h\|_2-\mathbb E\|g\|_2.
\]

We also have $\|h\|_2^2=\sum_{i=1}^m |h_i|^2=\frac{1}{2} W$, where $W\sim \chi^2 (2m)$. So
\begin{align*}
    \mathbb E\|h\|_2-\sqrt{m}=\frac{1}{\sqrt 2}\left(\mathbb E\sqrt{W}-\sqrt{2m}\right).
\end{align*}
Note that $f(n)=\mathbb E\|g\|_2-\sqrt{n}$ is an increasing function in $n$, where $g\sim N(0,I_n)$, and $\|g\|_2^2\sim \chi^2(n)$. So for $m\geq n$,
\[  \mathbb E\|h\|_2-\sqrt{m} \geq \mathbb E\|g\|_2-\sqrt{n},
\]
which gives $\mathbb E \sigma_{\min}(A) \geq \sqrt{m}-\sqrt{n}$.
Since $\sigma_{\min}(A)$ is a Lipschitz function of $2mn$ independent real Gaussian random variables with mean zero and variance $1/2$,  the probability estimate follows from Gaussian concentration of Lipschitz functions (see \cite[Proposition 5.34]{vershynin_2012}).
The largest singular value $\sigma_{\max}(A)$ can be estimated in a similar way using the Sudakov-Fernique inequality see, e.g., \cite[Theorem 7.2.11]{vershynin2018high}.
\end{proof}

\begin{lemma}[Smallest eigenvalue of a square Gaussian random matrix, \cite{edelman1988eigenvalues} and Theorem 1.1 in \cite{tao2010random}]\label{lem:square}
Let $A$ be an $n\times n$ complex standard Gaussian random matrix. Then, for any $\epsilon>0$,
\begin{align}
   \mathbb P\left( \sigma_{\min} (A)\geq \epsilon n^{-1/2}\right)=e^{-\epsilon^2}.\notag
\end{align}
\end{lemma}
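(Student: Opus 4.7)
The plan is to reduce the statement to a tail bound on $\lambda_{\min}(A^*A) = \sigma_{\min}^2(A)$ and then exploit the translation invariance of the Laguerre Unitary Ensemble density. Setting $t := \epsilon^2/n$, the event $\{\sigma_{\min}(A) \geq \epsilon n^{-1/2}\}$ is identical to $\{\lambda_{\min}(A^*A) \geq t\}$, so it suffices to prove that $\P(\lambda_{\min}(A^*A) > t) = e^{-nt}$ for every $t \geq 0$.

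The first step is to compute the joint density of the ordered eigenvalues $\lambda_1 \geq \cdots \geq \lambda_n \geq 0$ of $W := A^*A$. Since $A$ has i.i.d.\ standard complex Gaussian entries, $W$ is a complex Wishart matrix whose density on the cone of $n \times n$ positive Hermitian matrices is proportional to $\det(W)^{n-n} e^{-\tr(W)} = e^{-\tr(W)}$. Applying the Weyl integration formula (the $\beta = 2$ case) introduces the squared Vandermonde Jacobian and yields the LUE joint density
\begin{align*}
    f(\lambda_1,\ldots,\lambda_n) = \frac{1}{Z_n} \prod_{i=1}^n e^{-\lambda_i}\prod_{1 \leq i < j \leq n}(\lambda_i - \lambda_j)^2.
\end{align*}

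The second step is the key observation that the Vandermonde factor is translation invariant. Under the shift $\mu_i := \lambda_i - t$, the product $\prod_{i<j}(\lambda_i - \lambda_j)^2$ is unchanged, the exponential factor contributes $e^{-nt}$, and the domain $[t,\infty)^n$ becomes $[0,\infty)^n$. Hence
\begin{align*}
    \P(\lambda_{\min} > t) &= \frac{1}{Z_n}\int_{[t,\infty)^n} \prod_i e^{-\lambda_i}\prod_{i<j}(\lambda_i - \lambda_j)^2 \, d\lambda \\
    &= \frac{e^{-nt}}{Z_n}\int_{[0,\infty)^n} \prod_i e^{-\mu_i}\prod_{i<j}(\mu_i - \mu_j)^2 \, d\mu = e^{-nt},
\end{align*}
since the remaining integral equals $Z_n$ by normalization of the joint density. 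Substituting $t = \epsilon^2/n$ gives $\P(\sigma_{\min}(A) \geq \epsilon n^{-1/2}) = e^{-\epsilon^2}$, as claimed (using continuity of the distribution of $\sigma_{\min}$ to pass from strict to non-strict inequality).

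The only nontrivial ingredient is the derivation of the LUE joint eigenvalue density, which is classical random matrix theory; the remainder is a one-line change of variables that works precisely because the Vandermonde is translation invariant and, in the square Wishart case $m=n$, no $\det(W)^{m-n}$ factor obstructs the shift. An alternative approach via Edelman's Householder bidiagonalization, which expresses $\sigma_{\min}^2$ in terms of independent chi-squared variables, is available but less concise.
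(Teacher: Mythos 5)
Your proof is correct. The paper gives no proof of this lemma, citing Edelman (1988) and Tao--Vu instead, and your argument is precisely the classical derivation behind those citations: for the square complex Wishart case the exponent $m-n=0$ kills the determinant factor in the LUE density, the squared Vandermonde is translation invariant, and shifting the integration domain by $t$ produces the exact factor $e^{-nt}$, giving $n\,\lambda_{\min}(A^*A)\sim\mathrm{Exp}(1)$ and hence $\P\bigl(\sigma_{\min}(A)\geq \epsilon n^{-1/2}\bigr)=e^{-\epsilon^2}$. The normalization also checks out against the paper's convention that a standard complex Gaussian entry has density $\pi^{-1}e^{-|z|^2}$, so the weight in the eigenvalue density is exactly $e^{-\lambda}$ with no spurious constant in the exponent.
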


\bibliographystyle{plain}
\bibliography{ref.bib}
\end{document}